\newtheorem{thm}{Theorem}[section]
\newtheorem{prop}[thm]{Proposition}
\theoremstyle{definition}
\theoremstyle{remark}
\numberwithin{equation}{section}
\numberwithin{equation}{section}
\numberwithin{thm}{section}
\numberwithin{equation}{section}
\numberwithin{thm}{section}
\begin{document}




\title[]
{Multiple Riemann wave solutions of the general form of quasilinear hyperbolic systems}

\author[A.M. Grundland]{A.M. Grundland}
\address{Centre de Recherches Math\'ematiques, Universit\'e de Montr\'eal,\\
	Succ. Centre-Ville, CP 6128, Montr\'eal (QC) H3C 3J7, Canada \\
	Department of Mathematics and Computer Science, Universit\'e du Qu\'ebec, 
	CP 500, Trois-Rivi\`eres (QC) G9A 5H7, Canada.}

\email{grundlan@crm.umontreal.ca}


\author{J. de Lucas}
\address{Department of Mathematical Methods in Physics, University of Warsaw, \\ ul. Pasteura 5, 02-093, Warszawa, Poland.}
\email{javier.de.lucas@fuw.edu.pl}
\subjclass{{35Q53} (primary); 35A30, 35Q58, 53A05 (secondary)}

\keywords{$k$-wave solution, Riemann invariant, conditional symmetry, Lie point symmetry. }

\date{}




\begin{abstract}
	The objective of this paper is to construct geometrically Riemann $k$-wave solutions of the general form of first-order quasilinear hyperbolic systems of partial differential equations. To this end, we adapt and combine elements of two approaches to the construction of Riemann $k$-waves, namely the symmetry reduction method and the generalized method of characteristics. We formulate a geometrical setting for the general form of the $k$-wave problem and discuss in detail the  conditions for the existence of $k$-wave solutions. An auxiliary result concerning the Frobenius theorem is established. We use it to obtain formulae describing the $k$-wave solutions in closed form. Our theoretical considerations are illustrated by examples of hydrodynamic type systems including the Brownian motion equation.
\end{abstract}
\maketitle
\section{Introduction}\label{Introduction}\setcounter{equation}{0}

Riemann waves represent a very important class of solutions of first-order nonlinear systems of partial differential equations (PDEs). They are ubiquitous in  mathematical physics since they are admitted in all multidimensional hyperbolic systems and constitute their elementary solutions. Their characteristic feature is that, in most cases, they are expressible in implicit form. 

In this section we consider the possibility of generating the idea of Riemann wave solutions of a hyperbolic system of partial differential equations to what may be considered a nonlinear superposition of Riemann waves. This fact will be explained in detail in Sections 5 and 6.  All functions are assumed to be smooth. We now discuss a homogeneous hyperbolic quasilinear first-order system   of $q$ equations in the  dependent (properly determined) variables $u=(u^1,\ldots,u^q)$ and  independent variables $x=(x^1,\ldots,x^p)$ of the form
\begin{equation}
	\label{Eq:HydEq}
	\sum_{i=1}^pA^i(u)u_i=0, \qquad u_j=\frac{\partial u}{\partial x^j},\qquad j=1,\ldots,p,
\end{equation}
where $A^1,\ldots,A^p$ are $q\times q$ matrices whose coefficients are  functions depending on $u$. All considerations here are local, so it suffices to search for solutions defined on a neighbourhood of $x=0$. A {\it Riemann wave} solution for (\ref{Eq:HydEq}) is defined locally on a neighbourhood of $x=0$ as a solution of the equation
\begin{equation}
	\label{eq:udep}
	u=f(r(x,u)),
\end{equation}
for certain functions $f:\mathbb{R}\rightarrow \mathbb{R}^q$ and  $r:\mathbb{R}^{p+q}\rightarrow \mathbb{R}$ given by
$$ 
r(x,u)=\sum_{i=1}^p\lambda_i(u)x^i,
$$
where the function $r$ is called a {\it Riemann invariant} (RI) associated with the {\it wave vector}  $\lambda=(\lambda_1,\ldots,\lambda_p)$. Namely, it is assumed that
$$
\ker\left(\sum_{i=1}^p\lambda_i(u)A^i(u)\right)\neq 0.
$$
Equation (\ref{eq:udep}) defines a unique function $u(x)$ on a neighbourhood of $x=0$ for any function $f:\mathbb{R}\rightarrow \mathbb{R}^q$ and
$$
\frac{\partial u^\alpha}{\partial x^i}(x)=\phi(x)^{-1}\lambda_i(u(x))\frac{df^{\alpha}}{dr}(r(x,u(x))),\quad i=1,\ldots,p,\quad \alpha=1,\ldots,q,
$$
where 
$$
\phi(x)=1-\sum_{\alpha=1}^q\frac{\partial r}{\partial u^\alpha}(x,u(x))\frac{df^{\alpha}}{dr}(r(x,u(x))).
$$
The solutions (\ref{eq:udep}) have rank at most equal to one. They are the building blocks for constructing more general types of solutions that describe superpositions of many simple Riemann waves, which are much more interesting from the physical point of view. Until now, the only way to approach  this task was through the method of characteristics or via group theory. The method of characteristics relies on treating RIs as new dependent variables (which remain constant along appropriate characteristic curves of the basic system), which leads to the reduction of the dimension of the problem. The most important theoretical results obtained with the use of the method of characteristics (see e.g. \cite{CH62,HK89,Je76,JK84,Pe85,RJ83} and \cite{Wh74}  for a review
) include the finding of some necessary and sufficient conditions for the existence of  so-called {\it Riemann $k$-waves} in multidimensional systems. It was shown in \cite{Ca53,Pe71,Pe85}, through the use of involutive differential equations in the Cartan sense, that these solutions of (\ref{Eq:HydEq}) depend on $k$ arbitrary functions of one variable. 

Certain criteria for determining the elastic or nonelastic character of the superposition between Riemann waves, particularly useful in physical applications, were also found \cite{Je76,Pe85,RJ83}. In applications to fluid dynamics and field theory, many new and interesting results were obtained \cite{Bo65,DG96,FK04,GL14,RJ83,Tr62}. The method of characteristics, like all other techniques for  solving PDEs, has its limitations. These limitations motivated the search for the means of constructing larger classes of multiple wave solutions expressible in terms of Riemann invariants, i.e.
\begin{equation}
	\label{Eq:Sol}
	u=f\left(r^{1}(x,u),\ldots,r^{k}(x,u)\right),
\end{equation}
where $f:\mathbb{R}^k\rightarrow \mathbb{R}$ and
$$
r^{a}(x,u)=\sum_{i=1}^p\lambda^{(a)}_i(u)x^i,\qquad 1\leq a\leq k.
$$
The natural way to do this was to look at these solutions from the point of view of the group invariance properties of the system (\ref{Eq:HydEq}).  The feasibility and clear advantages of such an approach were demonstrated for certain fluid dynamics equations \cite{DG96,GT96}.

The objective of this paper is to develop a systematic way of constructing multiple Riemann waves by making use of both approaches, namely the method of characteristics and the group analysis of differential equations. The basic feature of Riemann $k$-waves is that they remain constant on $(p-k)$- dimensional hyperplanes, where the set of linearly independent wave covectors $\lambda^{(1)},\ldots,\lambda^{(k)}$ entering the superposition vanish. In the context of group theory, this means that the graph  of a solution, $\Gamma=\{(x,u(x)):x\in \mathbb{R}^p\}$, is invariant under all vector fields on $\mathbb{R}^{p+q}$ of the form
\begin{equation}
	\label{eq:Vec}
	X_a=\sum_{i=1}^p\eta^i_a(u)\partial_{x^i},\qquad a=1,\ldots,k,
\end{equation}
that satisfy the orthogonality conditions
$$
\sum_{i=1}^p\eta^i_a(u)\lambda^{(a)}_i(u)=0,\qquad \forall a=1,\ldots,k.
$$
Then, $u(x)$ is a solution of (\ref{Eq:HydEq}) for some function $f$ because the functions $\{r^1,\ldots,r^k,u^1,\ldots,u^q\}$ constitute a complete set of invariants of the Abelian algebra $L$ of the vector fields (\ref{eq:Vec}). The implicit form of these solutions (\ref{Eq:Sol}) leads to some limitation in the application of the classical symmetry reduction method to this case. To overcome these limitations, we rectify the set of vector fields $X_1,\ldots,X_k$ by a change of independent and dependent  variables in $\mathbb{R}^{p+q}$, choosing RIs as the new independent variables. The initial system (\ref{Eq:HydEq}) expressed in these new coordinates, complemented by the invariance conditions for the rectified vector fields $X_1,\ldots,X_k$, form an overdetermined quasilinear system. Thus, the solutions of this system are invariant under the Abelian group corresponding to $L$. These vector fields $X_1,\ldots,X_k$ are conditional symmetries of the initial system (\ref{Eq:HydEq}).

In this paper we explore several different aspects of the $k$-wave problem. We investigate certain general properties of the general form of first-order quasilinear hyperbolic systems and propose several new approaches to the construction of its solutions. We do not obtain all solutions of this system of PDES, although the analysis of the obtained results can provide some interesting physical information concerning nonlinear superpositions of Riemann waves. 

The paper is organised as follows. Section 2 describes the basic theory of Riemann invariants. In Section 3, the relation  between inhomogeneous and the corresponding homogeneous first-order systems of PDEs in their general form is studied. Section 4 contains simple examples to illustrate the construction introduced in Section 3. Section 5 contains a geometric formulation of the Riemann $k$-wave problem, which represents a generalisation of the approach introduced in Section 2. In Section 6 a new technique for generating $k$-wave solutions   is proposed  and conditions for their existence are found. Obtaining these results required certain modifications of the Frobenius theorem, which are presented in the Appendix. In Section 7 we analyse the existence conditions for $k$-wave solutions and provide their geometric interpretation. Section 8 contains two examples of the application of our approach to hydrodynamic-type systems including the construction of Riemann double waves for the Brownian motion equation.

\section{Riemann invariants}\label{Se:RI}
This section reviews the Riemann invariants method for hyperbolic quasi-linear systems of first-order differential equations, also called {\it hydrodynamic equations}. Our approach provides some new elements of the geometric description of the method, which illuminate the proposed procedure for constructing multi-wave solutions.

Let us discuss a properly determined homogeneous quasi-linear hyperbolic system   of first-order partial differential equations (PDEs) of the form
\begin{equation}\label{Eq:Sys}
	\sum_{i=1}^p\sum_{\beta=1}^qA^{i\alpha}_\beta (u)u^\beta_i=0,\qquad \alpha=1,\ldots,q,
\end{equation}
in $p$ independent variables, $x^1,\ldots,x^p$, and $q$ dependent variables $u^1,\ldots,u^q$. Each variable $u^\beta_i$ stands for the derivative of $u^\beta$ with respect to $x^i$. All following considerations are local. Due to the invariance of (\ref{Eq:Sys}) under translations of the independent variables,  the study of (\ref{Eq:Sys}) can be reduced to the analysis of what happens close to the origin $x=0$. If we define $u_i=(u^1_i,\ldots,u_i^q)$, then (\ref{Eq:Sys}) can be rewritten as (\ref{Eq:HydEq}), where each $A^i$, for $i=1,\ldots,p$, is  a $q\times q$ matrix whose entries are functions that depend only on the dependent variables. Frequently, one writes $x^1,\ldots,x^n,t=x^{n+1}$ with $n+1=p$. Then, (\ref{Eq:Sys}) can be rewritten as 
\begin{equation}\label{pEF}
	\sum_{i=1}^nA^i(u)u_i+A^{n+1}(u)u_t=0.
\end{equation}
If $A^{n+1}={\rm Id}_{q}$, where ${\rm Id}_q$ is a $q\times q$ identity matrix,  then (\ref{pEF}) takes the so-called {\it evolutionary form}
\begin{equation}\label{Eq:bezx}
	u_t=-\sum_{i=1}^nA^i(u)u_i.
\end{equation}

A {\it wave covector} is a non-zero vector function $\lambda:\mathbb{R}^q\rightarrow \mathbb{R}^p\,\!^*$  such that 
$$
\lambda(u)=(\lambda_1(u),\ldots,\lambda_p(u)),\qquad \ker \left(\sum_{i=1}^p\lambda_i(u)A^i(u)\right)\neq 0.
$$
The function 
\begin{equation}\label{RI}
	r:(x,u)\in \mathbb{R}^p\times \mathbb{R}^q\mapsto \sum_{i=1}^p\lambda_i(u)x^i \in \mathbb{R},
\end{equation} where $x=(x^1,\ldots,x^p)\in \mathbb{R}^p$ and $u=(u^1,\ldots,u^q)\in \mathbb{R}^q$, 
is called the {\it Riemann invariant} associated with the covector $\lambda$. 

Consider a function
$
f:\mathbb{R}\rightarrow \mathbb{R}^q.
$
Let us first analyse when the condition $u=f(r(x,u))$ determines a solution $u=u(x)$ of (\ref{Eq:Sys}). The latter holds if and only if $g:(x,u)\in \mathbb{R}^p\times \mathbb{R}^q\mapsto f(r(x,u))-u\in \mathbb{R}^q$ satisfies  $g(x,u(x))=0$. The mapping $g$ is regular at every  point with $x=0$ since $g(0,u)=f(0)-u$ and the tangent mapping at 0, namely $T_0g$, in the basis $\{\partial_{ x_1},\ldots, \partial_{x_p}, \partial_{u_1},\ldots,\partial_{u_q}\}$, is given by 
$$
[T_{(x,u)}g]=\left[\frac{df^\alpha}{dr}\lambda_i(u)\bigg| D^\alpha_\beta\right],\qquad 
D_\beta^\alpha=\sum_{i=1}^p\frac{df^\alpha}{dr}\frac{\partial \lambda_i}{\partial u^\beta}(u)x^i-\delta^\alpha_\beta
$$ 
and 
$$
[T_{(0,u)}g]=\left[\frac{df^\alpha}{dr}\lambda_i(u)\bigg| -\delta^\alpha_\beta\right].
$$
The differentiability of $g$ ensures that $D^\alpha_\beta$ is regular in an open subset containing the points $(0,u)$ where $u\in \mathbb{R}^q$. By the implicit function theorem, $u=f(r(x,u))$  determines $u=u(x)$ in an open subset of $\mathbb{R}^p$ containing $x=0$. In particular, $u(0)=f(0)$. Moreover,
\begin{equation}\label{Eq:partial}
	\frac{\partial u^\alpha}{\partial x^i}(x)=\frac{df^\alpha}{dr}(r(x,u(x)))\left(\sum_{\beta=1}^q\sum_{j=1}^px^j\frac{\partial u^\beta}{\partial x^i}(x)\frac{\partial \lambda_j}{\partial u^\beta}(u(x))+\lambda_i(u(x))\right)
\end{equation}
amounts to 
$$ \sum_{\beta=1}^q\!\left[\delta^\alpha_\beta\!-\!\sum_{j=1}^p\frac{df^\alpha}{dr}(r(x,u(x))\frac{\partial \lambda_j}{\partial u_\beta}(u(x))x^j\right]\!\frac{\partial u^\beta}{\partial x^i}(x)\!=\!\frac{df^\alpha}{dr}(r(x,u(x)))\lambda_i(u(x)).
$$
The $q\times q$ matrix $\Psi$, whose entries depend on dependent and independent variables, is defined by
$$
\Psi^\alpha_\beta(x,u)=\delta^\alpha_\beta-\sum_{j=1}^p\frac{df^\alpha}{dr}(r(x,u))\frac{\partial \lambda_j}{\partial u^\beta}(u)x^j\Rightarrow \Psi^\alpha_\beta(0,u)=\delta^\alpha_\beta,\,\, \alpha,\beta=1,\ldots,q.
$$
Thus, $\Psi(x,u)$ is an invertible matrix in an open subset of each point $(0,u)$. Then,
\begin{equation}\label{eq1}
	\frac{\partial u^\alpha}{\partial x^i}(x)=\sum_{\beta=1}^q(\Psi^{-1})^\alpha_\beta(x,u(x)) \frac{df^\beta}{dr}(r(x,u(x)))\lambda_i(u(x))
\end{equation}
and $\sum_{\beta=1}^q\Psi^\alpha_\beta(x,u)\frac{df^\beta}{dr}(r(x,u))=I$ can be rewritten as 
\begin{multline*}
	I=\frac{df^\alpha}{d r}(r(x,u))-\sum_{\beta=1}^q\sum_{j=1}^p\frac{df^\alpha}{dr}(r(x,u))\frac{\partial\lambda_j}{\partial u^\beta}(u)x^j\frac{df^\beta}{d r}(r(x,u))\\=\frac{df^\alpha}{dr}(r(x,u))-\sum_{\beta=1}^q\frac{df^\alpha}{dr}(r(x,u))\frac{\partial r}{\partial u^\beta}(u)\frac{df^\beta}{dr}(r(x,u))\\=\left(1-\sum_{\beta=1}^q\frac{\partial r}{\partial u^\beta}(x,u)\frac{df^\beta}{dr}(r(x,u))\right)\frac{df^\alpha}{dr}(r(x,u)).
\end{multline*}
The latter yields
$$
\sum_{\beta=1}^q\Psi^\alpha_\beta(x,u)\frac{df^\beta}{dr}(r(x,u))=\left(1-\sum_{\beta=1}^q\frac{\partial r}{\partial u^\beta}(x,u)\frac{df^\beta}{dr}(r(x,u))\right)\frac{df^\alpha}{dr}(r(x,u)).
$$
Let us define 
$$\Phi(x,u)=1-\sum_{\beta=1}^q\frac{\partial r}{\partial u^\beta}(x,u)\frac{df^\beta}{dr}(r(x,u)).
$$
Hence,
$$
\sum_{\alpha,\beta=1}^q(\Psi^{-1})^\pi_\alpha(x,u)\Psi^\alpha_\beta(x,u)\frac{df^\beta}{dr}(r(x,u))=\sum_{\alpha=1}^q(\Psi^{-1})^\pi_\alpha(x,u)\Phi(x,u)\frac{df^\alpha}{dr}(r(x,u))$$
and 
$$
\frac{df^\pi}{dr}(r(x,u))=\sum_{\alpha=1}^q(\Psi^{-1})^\pi_\alpha(x,u)\Phi(x,u)\frac{df^\alpha}{dr}(r(x,u)).
$$
Using the previous equality in (\ref{eq1}), we obtain that
\begin{equation}\label{PDEu}
	\frac{\partial u^\beta}{\partial x^i}(x)=\sum_{\alpha=1}^q(\Psi^{-1})^\beta_\alpha(x,u)\frac{df^\alpha}{dr}(x,u)\lambda_i(u(x))=\frac{1}{\Phi(x,u)}\frac{df^\beta}{dr}(x,u)\lambda_i(u(x)).
\end{equation}
Let us now consider a function $\eta:u\in \mathbb{R}^q\mapsto (\eta^1(u),\ldots,\eta^p(u))\in \mathbb{R}^p$ such that $$\langle \lambda(u),\eta(u)\rangle=\sum_{i=1}^p\eta^i(u)\lambda_i(u)=0.
$$
A solution $u(x)$ of (\ref{PDEu}) and the function $\eta$ give rise to a vector field on $\mathbb{R}^{p}$ of the form
$$
X_{\eta,u(x)}(x)=\sum_{i=1}^p\eta^i(u(x))\frac{\partial}{\partial x^i}.
$$
In view of (\ref{PDEu}), we obtain
\begin{multline*}
	(X_{\eta,u(x)} u^\alpha)(x)=\sum_{i=1}^p\eta^i(u(x))\frac{\partial u^\alpha}{\partial x^i}(x)\\=\sum_{i=1}^p\eta^i(u(x))\Phi^{-1}(x,u(x))\frac{df^\beta}{dr}(r(x,u(x)))\lambda_i(u(x))=0. 
\end{multline*}
Hence, the coordinates $u^\alpha(x)$ of any particular solution $u(x)$  of the  hydrodynamic equations (\ref{Eq:HydEq}) are constants of the motion of $X_{\eta,u(x)}$ and therefore $u(x)$ is constant along its integral curves  (\ref{Eq:Sys}).

\section{A general inhomogeneous first-order system}\label{sec:kGen}

Let us study a more general case of (properly determined) hydrodynamic equations (\ref{Eq:Sys}) when an additional affine term appears explicitly and the matrix functions $A^i$ depend on the dependent variables $u^1,\ldots,u^q$, and also on the independent variables $x^1,\ldots,x^p$, namely
\begin{equation}\label{Eq:InPDE}
	\sum_{i=1}^p\sum_{\beta=1}^qA^{i\alpha}_\beta(x,u)u^\beta_i=b^\alpha(x,u),\qquad \alpha=1,\ldots,q,
\end{equation}
for certain functions $b^1,\ldots,b^q:\mathbb{R}^{p+q}\rightarrow \mathbb{R}$, 
where $x=(x^1,\ldots,x^p)\in \mathbb{R}^p$ and $u=(u^1,\ldots,u^q)\in \mathbb{R}^q$.

Let us show that an appropriate change of variables turns the inhomogeneous system (\ref{Eq:InPDE}) with $b\neq 0$ into a homogeneous one. Assume, without loss of generality, that $b^1\neq 0$ on an open subset $U\subset \mathbb{R}^{p+q}$. Then, there always exists an invertible $q\times q$ matrix $M$ whose coefficients are functions on $U$ such that $Mb=(1,0,\ldots,0)^T\in \mathbb{R}^q$. For instance, let us assume
\begin{equation}\label{Eq:M}
	M(x,u)=\frac{1}{b^1(x,u)}\left[\begin{array}{ccccc}
		1&0&0&\ldots&0\\
		-b^2(x,u)&b^1(x,u)&0&\ldots&0\\
		-b^3(x,u)&0&b^1(x,u)&\ldots&0\\
		\ldots&\ldots&\ldots&\ldots&0\\
		-b^n(x,u)&0&0&\ldots&b^1(x,u)\\
	\end{array}\right].
\end{equation}
Then,
\begin{equation}\label{Eq:InPDE2}
	\sum_{i=1}^p\sum_{\beta=1}^q\mathcal{A}^{i\alpha}_\beta(x,u)u^\beta_i=\delta_1^\alpha,\qquad \alpha=1,\ldots,q,
\end{equation}
where we set $\mathcal{A}^i(x,u)=M(x,u)A^i(x,u)$ and $\delta_1^\alpha$ is the Kronecker delta $\delta^\alpha_\beta$ with $\beta=1$. Define also
\begin{equation}\label{Eq:ChangeVar}
	\tilde{u}=u-x^{p+1}(1,0,\ldots,0)^T\in \mathbb{R}^q.
\end{equation}
Then, a particular solution $u(x^1,\ldots,x^p)$ of (\ref{Eq:InPDE}) induces a new function 
$$
\tilde{u}(x^1,\ldots,x^{p+1})=u(x^1,\ldots,x^p)-x^{p+1}(1,0,\ldots,0)^T\in \mathbb{R}^q
$$
and therefore
$$
\frac{\partial\tilde  u}{\partial x^i}(x^1,\ldots,x^{p+1})\!=\!\frac{\partial u}{\partial x^i}(x^1,\ldots,x^p),\,\, \frac{\partial \tilde u}{\partial x^{p+1}}(x^1,\ldots,x^{p+1})\!=\!(-1,0,\ldots,0)^T,
$$
for $i=1,\ldots,p$. Let us also define the $q\times q$ matrices, with $i=1,\ldots,p$ and coefficients given by functions on $\mathbb{R}^{p+q}$, of the form
$$
\tilde{A}^i(x,\tilde{u})=\mathcal{A}^i(x,\tilde{u}+x^{p+1}(1,0,\ldots,0) ^T),\,\, i=1,\ldots,p,\,\, \forall \tilde{u}\in \mathbb{R}^q, \,\, \forall x^{p+1}\in \mathbb{R},
$$
and let $\tilde{A}^{p+1}$ be the $q\times q$ identity matrix. Then, under the change of variables (\ref{Eq:ChangeVar}), the first-order inhomogeneous system (\ref{Eq:InPDE2}) becomes a quasilinear homogeneous first-order system of PDEs in $p+1$ independent variables
\begin{equation}\label{Eq:InPDE0}
	\sum_{i=1}^{p+1}\sum_{\beta=1}^q\tilde A^{i\alpha}_\beta(x,\tilde{u})\tilde{u}^\beta_i=0,\qquad \alpha=1,\ldots,q.
\end{equation}
Note that the above method allows us to recover solutions of the inhomogeneous system (\ref{Eq:InPDE}) as a certain subset of the solutions of a homogeneous equation (\ref{Eq:InPDE0}). However, a solution of (\ref{Eq:InPDE0}) does not need to give rise to a solution of the inhomogeneous system (\ref{Eq:InPDE}) since the expression
$$
\widetilde{u}(x^1,\ldots,x^{p+1})+x^{p+1}(1,0,\ldots,0)^T
$$
need not be a function depending only on $x^1,\ldots,x^p$.

The advantage of our previous approach is that homogeneous systems of first-order differential equations are often easier to solve than the corresponding inhomogeneous ones. 
As a consequence of previous considerations, we will discuss, in what follows, properly determined homogeneous quasi-linear first-order systems of PDEs. It is frequently convenient to denote the independent variables by $x^{1},x^2,\ldots,x^n, t=x^p$, where $n+1=p$, so that our hydrodynamic system, after reduction to the homogeneous form (\ref{Eq:InPDE0}), can be written in the evolutionary form 
$$
\tilde u_t+\sum_{i=1}^n\tilde{A}^i(x,\tilde u)\tilde u_i=0.
$$
In what follows, we suppress the tilde above the $u$'s and consider homogeneous systems in the dependent variables without tilde, namely \eqref{Eq:bezx}, 
and, instead of referring to (\ref{Eq:InPDE0}), we will use 
\begin{equation}\label{Eq:InPDE02}
	\sum_{i=1}^{p}\sum_{\beta=1}^q A^{i\alpha}_\beta(x,{u}){u}^\beta_i=0,\qquad \alpha=1,\ldots,q,
\end{equation}
where we assume  a generic number $p$ of independent variables, which will be useful in what follows.

Consider an underdetermined quasilinear system (\ref{Eq:InPDE}). We can slightly generalise and rewrite such a system as follows
$$
\sum_{\mu=1}^p\sum_{i=1}^{q_h}A_i^{s\mu}(x,u)\lambda_{\mu}\gamma_1^i+\sum_{\mu=1}^p\sum_{i=q_h+1}^qA^{s\mu}_i(x,u)\lambda_{\mu}\gamma^i_2=b^s(x,u),\quad s=1,\ldots,m,
$$
where $1\leq q_h\leq q$ is a certain value used to divide the quasilinear system (\ref{Eq:InPDE}) into two parts. 
The vector $\gamma=(\gamma_1,\gamma_2)$ has $q$ components, while
$$
\gamma_1=(\gamma^{q_1})_{q_1=1,\ldots,q_h},\qquad \gamma_2=(\gamma^{q_2})_{q_2=q_h+1,\ldots,q}
$$
and we define
$$
(A_1\lambda ,A_2\lambda )=\left(\left(\sum_{\mu=1}^pA^{s\mu}_{q_1}\lambda_\mu\right)^{s_1},\left(\sum_{\mu=1}^pA^{s\mu}_{q_2}\lambda_\mu\right)^{s_2}\right),
$$
where $q_1=1,\ldots,q_h,q_2=q_h+1,\ldots,q$, to be an $m\times q$ matrix which is a proper decomposition of the matrix $\sum_{\mu=1}^pA^{s\mu}_j\lambda_{\mu}$ for $s=1,\ldots,m$ and $j=1,\ldots,q$ into two submatrices of dimensions $m\times q_1$ and $m\times q_2$.

If the  matrix $A_1\lambda$ is a nonsingular $m\times m$ matrix, then one can determine a quantity
$$
\gamma_{1}=(A_1\lambda )^{-1}(b-(A_2\lambda )\gamma_2)
$$ 
and there exists a correspondence $\gamma\otimes \lambda\mapsto (\gamma_2,\lambda)$, which determines a map on the set of simple integral elements for which the domain is formed by these elements that have the nonsingular matrix $A_1\lambda$.
All possible divisions of such a type determine an atlas composed from $(l m)$ maps covering the set of simple integral elements.

Note that it is remarkable that one can divide the set of simple integral elements $\gamma_{(s)}\otimes \lambda^{(s)}$ via strata numbered by rank of matrices $\sum_{\mu=1}^pA^{s\mu}_j\lambda_{\mu}$. Regular elements form strata of the highest dimension. One can determine atlases for other strata analogously.
\section{Reduction to homogeneous first-order systems}\label{Se:ApplIn}
As an application of the approach of the previous section, let us study the partial differential equations
%
\begin{equation}\label{Ex:Examples}
	u_t-(1+\beta^2x^2)^\kappa u_{xx}=0,\qquad u\in \mathbb{R},\qquad \beta\in \mathbb{R},\qquad \kappa\in \mathbb{N}\cup\{0\},
\end{equation}
which reproduce the heat equation and the equation of Brownian motion as particular cases. 
We introduce
$$
A^t=\left[\begin{array}{cc}0&0\\0&-1\end{array}\right],\quad A^x=\left[\begin{array}{cc}0&(1+\beta^2x^2)^\kappa \\1&0\end{array}\right],\quad b=\left[\begin{array}{c}a\\0\end{array}\right],\quad V=\left[\begin{array}{c}a\\c\end{array}\right],
$$
where $a=u_t$ and $c=u_x$. Then, equation (\ref{Ex:Examples}) can be written as
$$
A^tV_t+A^xV_x=b.
$$
According to (\ref{Eq:M}), in this case, the matrix $M$ takes the form
$$
M=\frac{1}{a}\left[\begin{array}{cc}1&0\\0&a\end{array}\right].
$$
One obtains that  the inhomogeneous system of partial differential equations
$$
MA^tV_t+MA^xV_x=Mb
$$
can be rewritten in the new variables
$$
\widetilde{V}=V-\left[\begin{array}{c}y\\0
\end{array}\right]
$$
as a homogeneous system
\begin{equation}\label{Eq:P}
	\left[\begin{array}{cc}0&0\\0&-1\end{array}\right]\widetilde{V}_t+\left[\begin{array}{cc}0&\frac{(1+\beta^2x^2)^\kappa }{\widetilde{a}+y}\\1&0\end{array}\right]\widetilde{V}_x+\left[\begin{array}{cc}1&0\\0&1\end{array}\right]\widetilde{V}_{y}=\left[\begin{array}{c}0\\0\end{array}\right].
\end{equation}

Let us consider another simple example:  the wave equation
\begin{equation}\label{Eq:Trautman}
	(\partial^2_t-\partial^2_x+k^2)u=0,\qquad k\in \mathbb{R},
\end{equation}
appearing in relativity problems concerning the existence of gravitational waves. To study it as a linear homogeneous partial differential equation, we introduce new coordinates $v_0=u_t$ and $v_1=u_x$. Hence, (\ref{Eq:Trautman}) is equivalent to 
$$
\partial_t v_0-\partial_x v_1=-k^2u,\qquad \partial_tu=v_0,\qquad \partial_xu=v_1.
$$
Consequently, one can write
$$
A^t=\left[\begin{array}{ccc}1&0&0\\0&0&1\\0&0&0\\\end{array}\right],\,\, A^x=\left[\begin{array}{ccc}0&-1&0\\0&0&0\\0&0&1\\\end{array}\right],\,\, b=\left[\begin{array}{c}-k^2u\\v_0\\v_1\end{array}\right],\,\, V=\left[\begin{array}{c}v_0\\v_1\\u\end{array}\right]
$$
and therefore 
\begin{equation}\label{Eq:Eq2}
	A^tV_t+A^xV_x=b.
\end{equation}
We consider the matrix 
$$
M=\frac{1}{-k^2u}\left[\begin{array}{ccc}1&0&0\\-v_0&-k^2u&0\\-v_1&0&-k^2u\end{array}\right]
$$
and the change of variables
$$
\widetilde{V}=V-\left[\begin{array}{c}\widehat{x}\\0\\0
\end{array}\right],
$$
which maps (\ref{Eq:Eq2})  onto
\begin{multline*}
	\frac{1}{-k^2u}\left[\begin{array}{ccc}1&0&0\\-\widehat{v_0}-\widehat{x}&0&-k^2u\\-v_1&0&0\end{array}\right]\widetilde{V}_t-\frac{1}{k^2u}\left[\begin{array}{ccc}0&-1&0\\0&\widehat{v_0}+\widehat{x}&0\\0&v_1&-k^2u\end{array}\right]\widetilde{V}_x\\+\left[\begin{array}{ccc}1&0&0\\0&1&0\\0&0&1\end{array}\right]\widetilde{V}_{\widehat{x}}=\left[\begin{array}{c}0\\0\\0\end{array}\right].
\end{multline*}

\section{On $k$-wave solutions}\label{Se:lw}
A particular solution $u:\mathbb{R}^p\rightarrow \mathbb{R}^q$ of the quasi-linear homogeneous system (\ref{Eq:InPDE02}) such that $T_xu:T_x\mathbb{R}^p\rightarrow T_{u(x)}\mathbb{R}^q$ has rank $k$ at every point of $\mathbb{R}^p$ is called a $k$-{\it wave solution}. To simplify the notation, we will use the Einstein summation convention. In this notation, indices will be summed between the proper limits for each type of variable, e.g. the independent variables are summed over their indices ranging from $1$ to $p$, while dependent variables are summed over their indices from $1$ to $q$. 

Let us consider solutions of the hydrodynamic equations given by
$$
u(x)=f\left(\lambda^{(1)}_j(u(x))x^j,\ldots,\lambda^{(k)}_j(u(x))x^j\right)
$$
for certain functions $f:\mathbb{R}^l\rightarrow \mathbb{R}^q$, $\lambda^{(i)}:\mathbb{R}^q\rightarrow \mathbb{R}^p$, with $i=1,\ldots,k$ such that  $\lambda^{(1)}\wedge\ldots\wedge \lambda^{(k)}\neq0$ on $\mathbb{R}^q$. 
The family $\lambda^{(1)},\ldots,\lambda^{(k)}$ is called a $k$-{\it wave covector} and  the functions $r^{(s)}:\mathbb{R}^p\times\mathbb{R}^q\rightarrow \mathbb{R}$ of the form
$$
r^{(s)}(x,u):=\lambda^{(s)}_j(u)x^j,\qquad s=1,\ldots,k,
$$
are referred to as {\it $k$-order Riemann invariants}. 
Thus, the condition
$$u(x)
=f(r(x,u(x))),\quad r(x,u(x))=( r^{(1)}(x,u(x)),\ldots,r^{(k)}(x,u(x)))$$
gives rise to a matrix of partial derivatives with entries
\begin{equation}\label{Eq:MatrixPartialDeri}
	\frac{\partial u^\alpha}{\partial x^i}(x)=\frac{\partial f^\alpha}{\partial r^{(s)}}(r(x,u))\left[\lambda_i^{(s)}(u(x))+x^j\frac{\partial \lambda_j^{(s)}}{\partial u^\beta}(u(x))\frac{\partial u^\beta}{\partial x^i}(x)\right]
\end{equation}
and then
{\small $$
	\left[\delta_\alpha^\beta-x^j\frac{\partial f^\alpha}{\partial r^{(s)}}(r(x,u(x)))\frac{\partial \lambda_j^{(s)}}{\partial u^\beta}(u(x))\right]\frac{\partial u^\beta}{\partial x^i}(x)=\lambda_i^{(s)}(u(x))\frac{\partial f^\alpha}{\partial r^{(s)}}(r(x,u(x))).
	$$}
If $\Psi(x,u)$ is an invertible $q\times q$ matrix whose entries are given by the functions
$$
\Psi^\alpha_\beta(x,u):=\delta_\alpha^\beta-x^j\frac{\partial f^\alpha}{\partial r^{(s)}}(r(x,u))\frac{\partial \lambda_j^{(s)}}{\partial u^\beta}(u),
$$
we get that $\Psi(0,u)$ is the $q\times q$ identity matrix and $\Psi(x,u)$ is invertible if $x$ is close enough to zero. On the other hand,
\begin{multline*}
	\Psi^\alpha_\beta(x,u)\frac{\partial f^\beta}{\partial r^{(s)}}(r(x,u))\\
	=\frac{\partial f^\alpha}{\partial r^{(s)}}(r(x,u))-x^j\frac{\partial f^\alpha}{\partial r^{(v)}}(r(x,u))\frac{\partial \lambda_j^{(v)}}{\partial u^\beta}(u)\frac{\partial f^\beta}{\partial r^{(s)}}(r(x,u))\\=\frac{\partial f^\alpha}{\partial r^{(s)}}(r(x,u))-\frac{\partial f^\alpha}{\partial r^{(v)}}(r(x,u))\frac{\partial r^{(v)}}{\partial u^\beta}(r(x,u))\frac{\partial f^\beta}{\partial r^{(s)}}(r(x,u)).
\end{multline*}
Therefore,
\begin{multline*}
	\Psi^\alpha_\beta(x,u)\frac{\partial f^\beta}{\partial r^{(s)}}(r(x,u))\\=\left[\delta_s^v-\frac{\partial f^\beta}{\partial r^{(s)}}(r(x,u))\frac{\partial r^{(v)}}{\partial u^\beta}(r(x,u))\right]\frac{\partial f^\alpha}{\partial r^{(v)}}(r(x,u))=[\Phi(x,u)]^v_s\frac{\partial f^\alpha}{\partial r^{(v)}}(r(x,u)),
\end{multline*}
where
$$
[\Phi(x,u)]^v_s:=\delta_s^v-\frac{\partial f^\beta}{\partial r^{(s)}}(r(x,u))\frac{\partial r^{(v)}}{\partial u^\beta}(r(x,u)).
$$
Therefore,
$$
\frac{\partial f^\alpha}{\partial r^{(v)}}(r(x,u))=[\Phi^{-1}(x,u)]^s_v\Psi^\alpha_\beta(x,u)\frac{\partial f^\beta}{\partial r^{(s)}}(r(x,u)).
$$
From this, one gets that
\begin{multline*}
	\frac{\partial u^\alpha}{\partial x^i}(x)=(\Psi^{-1})^\alpha_\beta(x,u)\lambda^{(s)}_i(u(x))\frac{\partial f^\beta}{\partial r^{(s)}}(r(x,u))\\=(\Psi^{-1})^\alpha_\beta(r(x,u))\lambda_i^{(s)}[\Phi^{-1}(x,u)]^s_t\Psi^\beta_\delta(x,u)\frac{\partial f^\delta}{\partial r^{(t)}}(r(x,u))\\=[\Phi^{-1}(x,u)]^t_s\lambda_i^{(s)}(u(x))\frac{\partial f^\alpha}{\partial r^{(t)}}(r(x,u)).
\end{multline*}
Consider a function of the form $\zeta:u\in \mathbb{R}^q\mapsto (\zeta^1(u),\ldots,\zeta^p(u))\in \mathbb{R}^p$ so that $\langle \lambda(u),\zeta(u)\rangle=0$. Then,
$$
\zeta^j(u(x))\frac{\partial u^\alpha}{\partial x^j}(x)=[\Phi^{-1}(x,u)]^t_s\zeta^j(u(x))\lambda_j^{(r)}(u(x))\frac{\partial f^\alpha}{\partial r^{(t)}}(r(x,u))=0
$$
and the coordinates $u^1(x),\ldots,u^q(x)$ of a solution $u(x)$ are invariant relative to the vector fields 
$$
X_{\zeta,u(x)}=\zeta^i(u(x))\frac{\partial}{\partial x^i}.
$$
In fact,
\begin{multline*}
	(X_{\zeta,u(x)} u^\alpha)(x)=\zeta^i(u(x))\frac{\partial u^\alpha}{\partial x^i}(x)\\=\zeta^i(u(x))[\Phi^{-1}(x,u(x))]^t_s\zeta^j(u(x))\lambda_j^{(r)}(u(x))\frac{\partial f^\alpha}{\partial r^{(t)}}(r(x,u))=0. 
\end{multline*}
Hence, the coordinates $u^\alpha(x)$ of any particular solution $u(x)$  of the hydrodynamic equations (\ref{Eq:InPDE02}) are constants of the motion of $X_{\zeta,u(x)}$ and $u(x)$ is constant along its integral characteristic curves.

\section{Conditions for the existence of  $k$-wave solutions}\label{Se:genkwave}
We hereafter assume that $b=0$ in (\ref{Eq:InPDE}). This section generalises the theory developed in \cite{GL91,GSW00,GV91} to the case of {\it $k$-waves} for the hydrodynamic equations (\ref{Eq:InPDE}) whose coefficients  $A^1,\ldots,A^p$ depend on dependent and independent variables.  This dependence was missing in the theory of $k$-wave solutions  for hydrodynamic equations devised in \cite{GV91}. Meanwhile, in \cite{GL91} the same hydrodynamic equations as in our present work were studied, but only two-waves,  were dealt with. 

Most results given in the previous sections can be extended to the most general case of $k$-wave solutions to be analysed in this section. Proofs given hereafter essentially follow the  ideas described in the previous sections, but they are technically much more complicated. Moreover, to keep the calculations easy to read, we skip certain details like the dependence of each type of function. Nevertheless, these details can be inferred by analysing the particular cases given in the previous sections.

Recall that a  {$k$-wave solution} of the hydrodynamic equations (\ref{Eq:InPDE02}) is a solution  $u:\mathbb{R}^p\rightarrow \mathbb{R}^q$ such that the tangent mapping $T_xu:T_x\mathbb{R}^p\rightarrow T_{u(x)}\mathbb{R}^q$ has rank $k$ at every $x\in \mathbb{R}^p$. Then, the tangent map can be written as
\begin{equation}\label{Eq:lsolution}
	T_xu=\sum_{\sigma=1}^k\xi^{\sigma}(x)\gamma_{(\sigma)}(x,u(x))\otimes \lambda^{(\sigma)}(x,u(x)),\quad x\in \mathbb{R}^p,\quad u(x)\in\mathbb{R}^q,
\end{equation}
where we assume $\xi:x\in \mathbb{R}^p\mapsto (\xi^1(x),\ldots,\xi^k(x))\in \mathbb{R}^{k}$, $\lambda^{(\sigma)}:(x,u)\in \mathbb{R}^{p+q}\mapsto (\lambda^{(\sigma)}_1(x,u),\ldots,\lambda^{(\sigma)}_p(x,u))\in \mathbb{R}^{p*}$,  and $\gamma_{(\sigma)}:\mathbb{R}^{p+q}\rightarrow \mathbb{R}^q$ for $\sigma=1,\ldots,k$. Since $T_xu$ has rank $k$ by assumption, $\gamma_{(1)}\wedge\ldots\wedge \gamma_{(k)}\neq 0$ and $\lambda^{(1)}\wedge\ldots\wedge \lambda^{(k)}\neq 0$ on $\mathbb{R}^{p+q}$. It is worth stressing that $T_xu$ is denoted by $du$ in other works \cite{GL91,GV91}. Recall also that the functions $u^1,\ldots,u^q,\xi^1,\ldots,\xi^k$ are considered as unknown functions of $x^1,\ldots,x^p$. A $k$-wave solution of (\ref{Eq:lsolution}), understood as a system of partial differential equations relative to $u(x)$, satisfies the hydrodynamic system (\ref{Eq:InPDE02}).

From now on, the dependence of $\xi$, $\lambda^{(1)},\ldots,\lambda^{(k)}$ and $\gamma_{(1)},\ldots,\gamma_{(k)}$, on their domain variables will be omitted to simplify the notation.   We also keep on using the Einstein summation convention as in the previous sections.  Moreover, Latin indices stand for the coordinates in $\mathbb{R}^p$. The Greek index $\sigma$ is used to sum over the $k$ components of a $k$-wave, while the remaining Greek indices are employed to sum over the coordinates in $\mathbb{R}^q$. Finally, we stress that we are interested in decompositions (\ref{Eq:lsolution}) such that, for fixed values of the simple elements $\gamma_{(1)}(x,u)\otimes \lambda^{(1)}(x,u), \ldots, \gamma_{(k)}(x,u)\otimes \lambda^{(k)}(x,u)$, the values of $\xi^1(x_0),\ldots,\xi^k(x_0)$ can be arbitrarily changed at a point $x_0\in \mathbb{R}^p$ without changing the fact that the associated function $u(x)$ is a solution of (\ref{Eq:InPDE02}).

Recall that the form of the vector functions $\gamma_{(1)},\ldots,\gamma_{(k)}$ depends on the covector functions $\lambda^{(1)},\ldots,\lambda^{(k)}$ so that $u(x)$ is a solution of the system (\ref{Eq:InPDE02}). In particular, the wave relation
\begin{equation}
	A^{i\alpha}_\beta\xi^\sigma\gamma^\beta_{(\sigma)}\lambda^{(\sigma)}_i=0,\qquad \alpha=1,\ldots,q, 
\end{equation}
holds. Since the above must be satisfied for every value of $\xi^1,\ldots,\xi^k$ at a certain fixed point in $\mathbb{R}^p$ and fixed $\lambda^{(\sigma)},\gamma_{(\sigma)}$, it follows that
\begin{equation}
	\label{Rel:GammaDelta}
	A^{i\alpha}_\beta\gamma^\beta_{(\sigma)}\lambda^{(\sigma)}_i=0,\qquad \alpha=1,\ldots,q, \quad \sigma=1,\ldots,k.
\end{equation}
In other words, if we fix the values of $\lambda^{(\sigma)}$, then 
\begin{equation}
	\gamma_{(\sigma)}\in \bigcap_{\alpha=1}^q\ker (\lambda^{(\sigma)}_iA_\beta^{i\alpha})\neq0,
	\qquad \sigma=1,\ldots,k.\end{equation}
Our main aim in this section is to extend  to  $k$-waves the result that the single waves of (\ref{Eq:InPDE02}) can be written as implicit rank-one solutions as given in the following proposition. Its proof, with less detail than the one below, can be found in \cite{GL91}.

\begin{prop} {\bf (Simple Riemann wave solutions)} Suppose that there exists a map $\varphi:\mathbb{R}^{p+q}\rightarrow \mathbb{R}$ such that the set of implicitly defined relations between the variables $u,x$, and $s$, namely
	\begin{equation}\label{Eq:eqeq}
		u=f(s),\qquad s=\varphi(x,u),
	\end{equation}
	can be solved so that $s$ and $u$ can be given as a graph over an open subset $D\subset E$, i.e. one has $s=s(x)$ and $u=f(x)$ for $x$ in an open subset $A\subset \mathbb{R}^p$. Assume that $\gamma(x,u)=\gamma(x',u)$ whenever    $s(x)=s(x')$. 
	Suppose that $\Gamma:u=f(s)$ is a characteristic curve obtained by solving the system of ODEs 
	$$
	\frac{du}{ds}=\alpha(s)\widetilde{\gamma}(s,u),
	$$
	where $\alpha$ is an arbitrary function of $s$ and $\widetilde{\gamma}(s(x),u)=\gamma(x,u)$ for $u\in \mathbb{R}^q$ and $x\in A$.  Then, (\ref{Eq:eqeq}) constitutes an exact (Riemann-type wave) solution of the quasilinear system (\ref{Eq:InPDE02}) with coefficients depending on $x$ and $u$.
\end{prop}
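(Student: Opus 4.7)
The plan is to show that the function $u(x) := f(s(x))$, with $s(x)$ determined by the implicit relation $s = \varphi(x,u)$, satisfies the homogeneous system (\ref{Eq:InPDE02}) by reducing the equation to the simple wave relation (\ref{Rel:GammaDelta}) in the case $k=1$, with the wave covector identified as $\lambda_i(x,u) := \partial \varphi/\partial x^i$.

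First I would compute $\partial u^\alpha/\partial x^i$ by the chain rule, obtaining $\partial u^\alpha/\partial x^i = (df^\alpha/ds)(s(x))\,\partial s/\partial x^i$. Differentiating the relation $s(x) = \varphi(x,u(x))$ with respect to $x^i$ and collecting the $\partial s/\partial x^i$ terms gives
\begin{equation*}
\frac{\partial s}{\partial x^i} = \frac{1}{\Phi(x,u)}\frac{\partial \varphi}{\partial x^i},\qquad \Phi(x,u) := 1 - \frac{\partial \varphi}{\partial u^\beta}\frac{df^\beta}{ds},
\end{equation*}
where $\Phi$ is nonzero on the open set $D$ by the same implicit-function-theorem argument used for the factor $\Phi$ in Section \ref{Se:RI}. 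The characteristic ODE along $\Gamma$, namely $du^\alpha/ds = \alpha(s)\widetilde{\gamma}^\alpha(s,u)$, combined with the compatibility $\widetilde{\gamma}(s(x),u) = \gamma(x,u)$, yields $(df^\alpha/ds)(s(x)) = \alpha(s(x))\gamma^\alpha(x,u(x))$. Putting the pieces together gives
\begin{equation*}
\frac{\partial u^\alpha}{\partial x^i}(x) = \frac{\alpha(s(x))}{\Phi(x,u(x))}\,\gamma^\alpha(x,u(x))\,\frac{\partial \varphi}{\partial x^i}(x,u(x)).
\end{equation*}

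Substituting this rank-one expression into (\ref{Eq:InPDE02}) and factoring out the scalar $\alpha(s)/\Phi(x,u)$ reduces the left-hand side to $A^{i\alpha}_\beta\gamma^\beta\lambda_i$ with $\lambda_i = \partial \varphi/\partial x^i$, which vanishes by the simple-wave relation (\ref{Rel:GammaDelta}) for $\sigma=1$. The main obstacle I anticipate lies in the bookkeeping around the hypothesis $\gamma(x,u) = \gamma(x',u)$ whenever $s(x) = s(x')$: this is exactly what is needed to make $\widetilde{\gamma}(s,u)$ a well-defined function of $(s,u)$, and hence what makes the characteristic ODE meaningful on $\Gamma$. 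A second subtlety is that, unlike in Section \ref{Se:RI} where the covector $\lambda$ is prescribed linearly through $r = \lambda_i(u)x^i$, here $\varphi$ is a general function of $(x,u)$, so one must check that the $\gamma$ appearing in the hypothesis is consistent with the kernel condition (\ref{Rel:GammaDelta}) attached to the implicitly defined covector $\lambda_i = \partial \varphi/\partial x^i$.
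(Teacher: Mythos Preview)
Your proposal is correct and follows essentially the same route as the paper: differentiate the implicit relation $s=\varphi(x,f(s))$ to obtain $d_xs=(1-\partial_{u^\beta}\varphi\,df^\beta/ds)^{-1}d_x\varphi$, feed in the characteristic ODE $df/ds=\alpha(s)\widetilde\gamma$ to get the rank-one form $du=\Phi^{-1}\alpha(s)\,\gamma\otimes d_x\varphi$, and conclude via the wave relation. The two subtleties you flag are handled exactly as you suggest---the hypothesis $\gamma(x,u)=\gamma(x',u)$ whenever $s(x)=s(x')$ is precisely what makes $\widetilde\gamma$ well-defined, and the phrase ``characteristic curve'' in the statement encodes the kernel condition on $\gamma$ relative to $\lambda=d_x\varphi$.
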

\begin{proof}
	The proof is obtained directly by the differentiation of (\ref{Eq:eqeq}), i.e. there exists $s(x)$, univocally determined, such that 
	$$
	s(x)=\varphi(x,f(s(x)))\Rightarrow  (ds)(x)=\frac{(d_x\varphi)(x,u(x))}{1-\sum_{\alpha=1}^q\frac{\partial\varphi}{\partial u^\alpha}(x,u(x))\frac{df^\alpha}{ds}(s(x))}.
	$$
	This shows that
	\begin{equation}\label{Eq:Cat}
		du(x)=\frac{\alpha(s(x))\widetilde{\gamma}(s(x),u(x)))\otimes (d_x\varphi)(x,u(x))}{1-\sum_{\alpha=1}^q\frac{\partial\varphi}{\partial u^\alpha}(x,u(x))\frac{df^\alpha}{ds}(s(x))},
	\end{equation}
	where $\widetilde{\gamma}(s(x),u(x))=\gamma(x,u(x))$.
	Hence,
	\begin{equation}\label{Eq:SE}
		du(x)=f(x)\gamma(x,u(x))\otimes \lambda(x,u(x)),
	\end{equation}
	for a certain function $f(x)$ and $du$
	becomes a simple integral element (\ref{Eq:SE}) of the quasilinear system (\ref{Eq:bezx}), where we exclude the gradient catastrophe where 
	$$
	\sum_{\alpha=1}^q\frac{\partial\varphi}{\partial u^\alpha}(x,u(x))\frac{df^\alpha}{ds}(s(x))=1.
	$$
\end{proof}

It is worth noting that the previous proposition implies that $(d_x\gamma)(x,u)$ is proportional to $ds(x)$. Moreover, the solution makes sense when the denominator in the value of $ds(x)$ is different from zero, which is just the condition of solvability of the system (\ref{Eq:eqeq}). Note also that $\gamma$ must take a constant value, for each fixed $u\in \mathbb{R}^q$, on the submanifold where $s(x)$ takes a constant value. This is a difference relative to the formalism of simple waves where $\lambda$ depends only on the dependent variables.


Note that from the the relation (\ref{Eq:Cat}), it follows that on the hypersurface $\mathcal{M}$ given by the   relations 
$$
u=f(s),\qquad s=\varphi(x,u),\qquad \sum_{\alpha=1}^q\frac{\partial \varphi}{\partial u^\alpha}(x,u(x))\frac{df^\alpha}{ds}(s(x))=1,
$$
the gradient of the function $s(x)$ becomes infinite. The solution can lose its sense (becomes infinite) on the hypersurface $\mathcal{M}$. Consequently, some types of discontinuities, namely {\it shock waves}, can appear.

For $k$-waves, the tangent  map $T_xu:T_x\mathbb{R}^p\rightarrow T_{u(x)}\mathbb{R}^q$ can be understood as an element of $(\mathbb{R}^{p})^*\otimes \mathbb{R}^q$. Moreover, each solution, $u(x)$, can be understood as a zero-form on $\mathbb{R}^p$ with values in $\mathbb{R}^q$. As such, the differential of $u$, assuming that it has rank $k$, takes the form  (\ref{Eq:lsolution}). 

To simplify the notation in what follows, we shall employ two differentials. Given $f\in C^\infty(\mathbb{R}^{p+q},\mathbb{R}^q)$, namely a function on $\mathbb{R}^{p+q}$ taking values in $\mathbb{R}^q$, we write
$$
d_xf= \frac{\partial f}{\partial x^i}dx^i,\qquad df=d_xf+ \frac{\partial f}{\partial u^\alpha}d_xu^\alpha(x).
$$
The second differential, $d^2u(x)$, must vanish and
\begin{equation}
	\label{eq:2}  d_x^2u(x)= \xi^{\sigma} d\gamma_{(\sigma)} \wedge \lambda^{(\sigma)} +\gamma_{(\sigma)} \otimes (d\xi^{\sigma} \wedge \lambda^{(\sigma)}+\xi^\sigma d\lambda^{(\sigma)} )=0.
\end{equation}
Moreover, we define
$$
X^i_{u^\alpha}=\frac{\partial X^i}{\partial u^\alpha},\quad X^i_Y=\frac{\partial X^i}{\partial u^\alpha}Y^\alpha,\quad \forall X^i\in C^\infty(\mathbb{R}^{p+q}),\quad \forall Y\in \mathbb{R}^q.
$$
By using (\ref{Eq:lsolution}), we obtain
$$
\begin{gathered}
	d\gamma_{(m)}=d_x\gamma_{(m)}+\gamma_{(m),\alpha}d_xu^\alpha=d_x\gamma_{(m)}+\xi^\sigma\gamma_{(m),\gamma_{(\sigma)}}\otimes \lambda^{(\sigma)},\\
	d\lambda^{(m)}=du^\alpha\wedge \lambda^{(m)}_{,u^\alpha}+d_x\lambda^{(m)}=d_x\lambda^{(m)}+\xi^\sigma\lambda^{(\sigma)}\wedge \lambda^{(m)}_{,\gamma_{(\sigma)}},
\end{gathered}
$$
for $m=1,\ldots,k$. Substituting the above expressions in (\ref{eq:2}), we get
\begin{multline}\label{Eq:Exp}
	\frac 12\xi^{\sigma}\xi^{\sigma'}(\gamma_{(\sigma)},\gamma_{(\sigma')})_u\otimes \lambda^{(\sigma')}\wedge\lambda^{(\sigma)}+\xi^\sigma\xi^{\sigma'}\gamma_{(\sigma)}\otimes \lambda^{(\sigma')}\wedge \lambda_{,\gamma_{(\sigma')}}^{(\sigma)}
	\\+\xi^\sigma d_x\gamma_{(\sigma )}\wedge\lambda^{(\sigma )}+\gamma_{(\sigma)}\otimes d\xi^\sigma\wedge\lambda^{(\sigma)}+\gamma_{(\sigma)}\otimes \xi^\sigma d_x\lambda^{(\sigma)},
\end{multline}
which has to be satisfied whenever (\ref{Eq:lsolution}) holds, 
where
$(\gamma_{(\sigma)},\gamma_{(\sigma')})_u$, which can be understood as a commutator, is an element of $C^\infty(\mathbb{R}^{p+q},\mathbb{R}^q)$ with coordinates
$$
(\gamma_{(\sigma)},\gamma_{(\sigma')})^\beta_u=\gamma_{(\sigma')}^\alpha\frac{\partial \gamma_{(\sigma)}^\beta}{\partial u^\alpha}-\gamma_{(\sigma)}^\alpha\frac{\partial \gamma_{(\sigma')}^\beta}{\partial u^\alpha}=\gamma^{\beta}_{(\sigma),\gamma_{(\sigma')}}-\gamma^{\beta}_{(\sigma'),\gamma_{(\sigma)}},\quad \forall \beta=1,\ldots,q.
$$
For each fixed value of $x\in \mathbb{R}^p$, the functions $\gamma_{(1)},\ldots,\gamma_{(k)}$ can be understood as the coordinates of  the $x$-dependent vector fields  on (see \cite{Dissertationes} for further details on this notion) $\mathbb{R}^{q}$  of the form
\begin{equation}
	\label{Eq:sigmaVector}
	\chi_\sigma:=\gamma_{(\sigma)}^\beta(x,u)\frac{\partial}{\partial u^\beta},\qquad \sigma=1,\ldots,k.
\end{equation}
Hence, $(\gamma_{(\sigma)},\gamma_{(\sigma')})_u$ can be understood, for each fixed value $x\in \mathbb{R}^p$, as the Lie bracket of the vector fields on $\mathbb{R}^q$ related to $\gamma_{(\sigma)}$ and $\gamma_{(\sigma')}$.

Let us consider an $x$-parametrised differential one-form on $\mathbb{R}^q$ satisfying $\omega(x,u)\in {\rm span}\{ \gamma_1(x,u),\ldots,\gamma_r(x,u)\}^\circ$, i.e. a mapping $\omega:\mathbb{R}^{p+q}\rightarrow T^*\mathbb{R}^q$ such that $\omega(x,\cdot):\mathbb{R}^q\rightarrow T^*\mathbb{R}^q$ is a standard differential one-form on $\mathbb{R}^q$ for each $x\in \mathbb{R}^p$ and $\omega(x,\cdot)$ vanishes on $\gamma_{(1)}(x),\ldots,\gamma_{(r)}(x)$.  Composing (\ref{Eq:Exp}) with $\omega$, one obtains
\begin{equation}\label{Eq:In}
	\frac 12\xi^{\sigma}\xi^{\sigma'}\langle \omega,(\gamma_{(\sigma)},\gamma_{(\sigma')})_u\rangle \lambda^{(\sigma')}\wedge \lambda^{(\sigma)}+\xi^\sigma\langle \omega,d_x\gamma_{(\sigma)}\rangle\wedge \lambda^{(\sigma)}=0.
\end{equation}
By our initial assumption, the above must hold for all possible values of the functions $\xi^1,\ldots,\xi^k$ at any fixed $x\in \mathbb{R}^p$. Then,
\begin{equation}\label{Eq:InvDis}
	(\gamma_{(\sigma)},\gamma_{(\sigma')})_u=\sum_{\sigma''=1}^k\tau^{\sigma''}_{\sigma \sigma'}\gamma_{(\sigma'')},\qquad \forall \sigma,\sigma'=1,\ldots,k,
\end{equation}
for certain functions $\tau^{\sigma''}_{\sigma \sigma'}\in C^\infty(\mathbb{R}^{p+q})$, where $\sigma,\sigma',\sigma''=1,\ldots,k$. Since   $\gamma_{(1)},\ldots,\gamma_{(k)}$ can be understood as $x$-dependent vector fields on $\mathbb{R}^q$ of the form (\ref{Eq:sigmaVector})
and in view of (\ref{Eq:InvDis}), it turns out that, for every fixed $x\in \mathbb{R}^p$, the vector fields $\chi_1(x),\ldots,\chi_k(x)$ span an involutive distribution on $\mathbb{R}^q$. Since $\{\gamma_{(1)},\ldots,\gamma_{(k)}\}$ are linearly independent by construction at every $(x,u)\in \mathbb{R}^{p+q}$, the $x$-dependent vector fields on $\mathbb{R}^{q}$ given by $\{\chi_1,\ldots,\chi_k\}$ span a distribution $\mathcal{D}_x(u)={\rm span}\{ \chi_1(x,u),\ldots,\chi_k(x,u)\}$, for $u\in \mathbb{R}^q$, of rank $k$ on $\mathbb{R}^q$ for every $x\in \mathbb{R}^p$. Hence, $\mathcal{D}_x$ can be integrated on $\mathbb{R}^q$ for every $x\in \mathbb{R}^p$ and there exists a family of $x$-parametric foliations of $\mathbb{R}^q$ of $k$-dimensional leaves. 

Expression (\ref{Eq:lsolution}) demonstrates that the tangent space to each solution $u(x)$ of the hydrodynamic equations at $x\in \mathbb{R}^p$  belongs to  $\mathcal{D}_x$. It is worth stressing  that the distribution $\mathcal{D}_x$ may be different at different values of $x$. 

Recalling  (\ref{Eq:In}) once again, one obtains
\begin{equation}\label{Eq:ConditionProblem}
	\langle w,d_x\gamma_{(m)}\rangle\wedge \lambda^{(m)}=0,\qquad m=1,\ldots,k.
\end{equation}
Let us assume that
\begin{equation}
	\label{eq:gamma1}
	d_x\gamma_{(m)}=\beta^\sigma_m\gamma_{(\sigma)}\otimes\lambda^{(\sigma)}+\pi_{(m)}\otimes \lambda^{(m)},\qquad m=1,\ldots,k,
\end{equation}
where $\pi_{(m)}$ is any $x$-parametrised vector field on $\mathbb{R}^q$, we sum over the $\sigma$ index and we do not sum over the $m$, and the $\beta^\sigma_m$ are $x$-parametrised functions on $\mathbb{R}^q$. Note that condition (\ref{eq:gamma1}) is not necessary for (\ref{Eq:ConditionProblem}) to hold, since it can be proved that a term $\gamma_{(k)}(x,u)\otimes \vartheta(x,u)$, where $\vartheta(x,u)$ is any $u$-parametrised differential one-form on $\mathbb{R}^p$, is a solution of (\ref{eq:gamma1}).

By substituting (\ref{eq:gamma1}) in (\ref{Eq:Exp}), one obtains
\begin{multline}\label{eq:P2}
	0= \frac 12\xi^{\sigma}\xi^{\sigma'}(\gamma_{(\sigma)},\gamma_{(\sigma')})_u\otimes \lambda^{(\sigma')}\wedge\lambda^{(\sigma)}+\xi^\sigma\xi^{\sigma'} \gamma_{(\sigma')}\otimes \lambda^{(\sigma)}\wedge \lambda_{,\gamma_{(\sigma)}}^{(\sigma')} 
	\\+\xi^\sigma\beta^{\sigma'}_\sigma\gamma_{(\sigma')}\otimes\lambda^{(\sigma')}\wedge\lambda^{(\sigma)}+\gamma_{(\sigma)}\otimes d\xi^\sigma\wedge\lambda^{(\sigma)}+\gamma_{(\sigma)}\otimes \xi^\sigma d_x\lambda^{(\sigma)}.
\end{multline}
Let us now consider a set of $x$-dependent differential forms on $\mathbb{R}^q$ such that
$$
\langle \omega_{\sigma'},\gamma_{(\sigma')}\rangle=\delta_{\sigma'}^\sigma,\qquad \sigma',\sigma=1,\ldots,k.
$$ Applying $\omega_s$ to the differential two-form with values in $\mathbb{R}^q$ given by (\ref{eq:P2}), one obtains
\begin{multline}\label{eq:P3}
	0=\frac 12\xi^{\sigma}\xi^{\sigma'}\tau_{\sigma\sigma'}^s \lambda^{(\sigma')}\wedge\lambda^{(\sigma)}+\xi^{s}\xi^{\sigma}\lambda^{(\sigma)}\wedge \lambda_{,\gamma_{(\sigma)}}^{(s)}\\+\xi^\sigma\beta^s_{\sigma}\lambda^{(s)}\wedge\lambda^{(\sigma)}+ d\xi^s\wedge\lambda^{(s)}+\xi^sd_x\lambda^{(s)},\quad s=1,\ldots,k.
\end{multline}
The above expression is not summed over $s$, but it represents $k$ different equalities for $s=1,\ldots,k$.
Multiplying the above expression exteriorly  by $\lambda^{(s)}$, we obtain
\begin{multline}
	\label{eq:P5}
	0=\frac 12\xi^{\sigma}\xi^{\sigma'}\tau_{\sigma\sigma'}^s \lambda^{(\sigma')}\wedge\lambda^{(\sigma)}\wedge\lambda^{(s)}\\+\xi^s\xi^{\sigma'} \lambda^{(\sigma')}\wedge \lambda_{,\gamma_{(\sigma')}}^{(s)}\wedge \lambda^{(s)}
	+\xi^sd_x\lambda^{(s)}\wedge\lambda^{(s)},\qquad s=1,\ldots,k.
\end{multline}
Since the above expression must hold for any set of  $\xi^1,\ldots,\xi^k$, one obtains that 
\begin{equation}\label{Eq:Con0}
	\tau_{\sigma\sigma'}^s=0,\qquad  (\sigma-\sigma')(\sigma-s)(\sigma'-s)\neq 0.
\end{equation}
This is a very important result since, as shown in the Appendix, it allows us to rescale $\gamma_{({1})},\ldots,\gamma_{(k)}$ so that they commute among themselves and it leads to very important applications, as noted in Section \ref{Se:Appl}. 
Moreover,  (\ref{eq:P5}) gives that
\begin{equation}\label{Eq:Con2}
	0=\lambda^{(\sigma)}\wedge \lambda^{(s)}_{,\gamma_{(\sigma)}}\wedge \lambda^{(s)},\qquad \forall \sigma,s=1,\ldots,k.
\end{equation}
Additionally,
\begin{equation}\label{Eq:Con1}
	d_x\lambda^{(s)}\wedge\lambda^{(s)}=0,\qquad\forall s=1,\ldots,k.
\end{equation}
In view of the integrability condition for Pffafian systems, the forms $\lambda^{(s)}$ can be reparametrised by multiplying them by a function so  that  the new reparametrised differential one-forms, which we still denote by $\lambda^{(1)},\ldots,\lambda^{(k)}$,  will satisfy
\begin{equation}\label{Eq:Spe}
	\lambda^{(s)}=d_x\varphi^s(x,u),\qquad s=1,\ldots,k,
\end{equation}
for certain functions $\varphi^1(x,u),\ldots,\varphi^k(x,u)$, which are defined up to a function on $u$ for each index $s=1,\ldots,k$. As $x\in \mathbb{R}^p$, the functions $\varphi^1,\ldots,\varphi^k$ can be defined on the domain of $\lambda^{(1)},\ldots,\lambda^{(k)}$ as long as it is simply-connected as a consequence of the Poincar\'e Lemma. The functions $\varphi^1,\ldots,\varphi^k$ are called the {\it potential wave functions}. 
It is relevant to stress that every parametrisation (\ref{Eq:lsolution}) satisfying the previous conditions (\ref{Eq:Con0})--(\ref{Eq:Con1}) is such that the redefinition of each $\lambda^{(s)}$ due to  multiplication by a function so as to satisfy (\ref{Eq:Spe}) leads to a rescaling of the associated $\gamma_{(s)}$ by the same function so as to keep a parametrisation (\ref{Eq:lsolution}) of the same $k$-wave. Nevertheless, the new functions $\gamma_{(1)},\ldots,\gamma_{(k)}$ still satisfy (\ref{Eq:Con0})--(\ref{Eq:Con1}).

Consider the non-empty submanifolds of $\mathbb{R}^p$ of the form
$$
N_{\vec{a},u}:=\{x\in \mathbb{R}^{p}:\varphi^\sigma(x,u)=a^\sigma,\sigma=1,\ldots,k\},
$$
which depend on $u\in \mathbb{R}^q$ and $\vec{a}=(a^1,\ldots,a^k)\in \mathbb{R}^k$.
Using the expression (\ref{eq:gamma1}), we get
$$
d_x\gamma_{(\sigma)}|_{TN_{\vec{a},u}}=(\beta^{\sigma'}_{\sigma}\gamma_{(\sigma')}\otimes \lambda^{(\sigma')})|_{TN_{\vec{a},u}}=0,\qquad \sigma=1,\ldots,k.
$$
Therefore, each $\gamma_{(\sigma)}$ is a constant on each $N_{\vec{a},u}$ and 
$$
\gamma_{(\sigma)}=\gamma_{(\sigma)}(\varphi^1(x,u),\ldots,\varphi^{k}(x,u),u),\qquad \sigma=1,\ldots,k.
$$
Recall that the vector fields $\chi_1(x),\ldots,\chi_k(x)$ span a foliation on $\mathbb{R}^q$ of $k$-dimensional leaves for each value of $x$. Note that at points of a submanifold $N_{\vec{a},u}\times\{u\}\subset \mathbb{R}^{p+q}$, the vector fields $\chi_1,\ldots,\chi_k$ always span the same distribution on $\mathbb{R}^q$. 

Note that the condition (\ref{Eq:Con2}) implies that if  $s,\sigma=1,\ldots,k$ and $\sigma\neq s$, then $\lambda^{(s)}$ can be written as 
\begin{equation}\label{Eq:PLS}
	\lambda^{(s)}_{,\gamma_{(\sigma)}}=\zeta_{s1}\lambda^{(s)}+\zeta_{s2}\lambda^{(\sigma)}\Rightarrow \lambda^{(s)}_{,\gamma_{(\sigma)}}|_{TN_{\vec{a},u}}=0
\end{equation}
for some functions $\zeta_{s1},\zeta_{s2}\in C^\infty(\mathbb{R}^{p+q})$.

If  the condition (\ref{Eq:InvDis}) holds, then the distribution $\mathcal{D}_x$ is integrable and there exists a foliation of $\mathbb{R}^q$ into $k$-dimensional surfaces for every $x$.  If, in addition, (\ref{Eq:Con2}) and (\ref{Eq:Spe}) are satisfied, then one can consider an integral submanifold $\mathcal{S}\subset \mathbb{R}^q$ of $\mathcal{D}_x$ and consider a parametrisation, $f=f(\tau^1,\ldots,\tau^k)$, of $\mathcal{S}$ so that 
\begin{equation}	\label{eq:speX}
	u=f(\tau^1,\ldots,\tau^{k}),\qquad \tau^\sigma=\varphi^\sigma(x,f(\tau^1,\ldots,\tau^k)),\qquad \sigma=1,\ldots,k.	
\end{equation}
For each $x$, the tangent vectors to the leaves of $\mathcal{D}_x$ take values in the distribution $\mathcal{D}_x$. Hence, using (\ref{Eq:Con0}) the explicit parametrisation of the manifold $S$ requires us to solve the following system of PDEs
\begin{equation}
	\label{Eq:Sur}
	\frac{\partial f}{\partial \tau^\alpha}=\mu_\alpha^{\alpha'}\gamma_{(\alpha')},\qquad \alpha=1,\ldots,k,
\end{equation}
for certain functions $\mu_\alpha^{\alpha'}=\mu^{\alpha'}_\alpha(\tau^1,\ldots,\tau^k)$ for $\alpha,\alpha'=1,\ldots,k$. Moreover, one can write
$$
d_xu(x)=\frac{\partial f}{\partial \tau^\alpha}\otimes d_x\tau^\alpha.
$$
If $u(x)$ is a particular solution parametrised by $f(\tau^1,\ldots,\tau^k)$, then
$$
d_xu(x)=\mu_\alpha^{\alpha'}\gamma_{(\alpha')}\otimes d_x\tau^\alpha=\xi^{\alpha'}\gamma_{(\alpha')}\otimes \lambda^{(\alpha')}
$$and
\begin{equation}\label{Eq:star}
	d_x\tau^\alpha\in \langle \lambda^{(1)},\ldots,\lambda^{(k)}\rangle,\qquad \alpha=1,\ldots,k.
\end{equation}
Since $d_x\tau^1\wedge\ldots\wedge d_x\tau^k\neq 0 $ and $\lambda^{(1)}\wedge\ldots\wedge \lambda^{(k)}\neq 0$,  one obtains that $\lambda^{(1)},\ldots,\lambda^{(k)}$ are functions of the form
\begin{equation}\label{Eq:LambdaAlpha}
	\lambda^{(\alpha)}=\lambda^{(\alpha)}(x,f(\tau^1,\ldots,\tau^k)),\qquad \alpha=1,\ldots,k.
\end{equation}
The conditions (\ref{Eq:Con2}) and (\ref{Eq:Con1}), in parametric form, are
\begin{equation}
	\label{Eq:Con5}
	\frac{\partial \lambda^{(q)}}{\partial \tau^m}=\alpha_q^q\lambda^{(q)}+\alpha_m^{q}\lambda^{(m)},\quad \lambda^{(q)}=d_x\varphi^q(x,f(\tau^1,\ldots,\tau^k)),
\end{equation}
where $q,m=1,\ldots,k,$ with $q\neq m$, and the coefficients $\alpha^q_m$ depend on the variables $\tau^1,\ldots,\tau^k$. %

\begin{thm}\label{Th:primer} {\bf (A Riemann $k$-wave solution)} Let us assume the following conditions:
	\begin{enumerate}
		\item  $S$ is a $k$-dimensional surface in $\mathbb{R}^q$ parametrised by a function $f$ satisfying  (\ref{Eq:Sur}),
		\item  There exists a set of functions $\varphi^1(x,u),\ldots,\varphi^k(x,u)$ such that
		\begin{equation}
			\label{eq:0}
			\lambda^{(\alpha)}=d_x\varphi^\alpha(x,f(\tau^1,\ldots,\tau^k)),\qquad \alpha=1,\ldots,k,
		\end{equation}
		where the characteristic covectors $\lambda^{(1)},\ldots,\lambda^{(k)}$ satisfy the conditions (\ref{Eq:Con5}) and are related to the $k$-dimensional distribution spanned by the vector fields associated with $\gamma_{(1)},\ldots,\gamma_{(k)}$ through the wave relation (\ref{Rel:GammaDelta}),
		\item The differential $d_xf$ has the factorial form 
		\begin{equation}
			\label{eq:00}
			\frac{\partial f}{\partial \tau^\alpha}\otimes d_x\tau^\alpha=\gamma_{(\alpha)}\otimes \lambda^{(\alpha)}
		\end{equation}
		for the $k$ functionally independent differentials $d_x\tau^1,\ldots, d_x\tau^k$.
		
	\end{enumerate}
	If the set of implicitly defined relations between $u,x,\tau$ of the form
	\begin{equation}\label{Eq:Cases}
		\begin{cases}
			u=f(\tau^1,\ldots,\tau^k),\\
			\tau^\alpha=\varphi^\alpha(x,f(\tau^1,\ldots,\tau^k)),\qquad& \alpha=1,\ldots,k,
		\end{cases}
	\end{equation}
	can be solved on a certain open set of $\mathbb{R}^{p+q+k}$ so that $u$ and $\tau$ become functions of $x$, then  $u(x)=f(\tau^1(x)\ldots,\tau^k(x))$, given by (\ref{Eq:Cases}), constitutes a $k$-wave solution for the hyperbolic homogeneous system of PDEs (\ref{Eq:InPDE02}).
\end{thm}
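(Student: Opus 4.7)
The plan is to reduce the statement to three operational tasks: (i) solve the implicit system (\ref{Eq:Cases}) for $u$ and $\tau$ as functions of $x$ near the origin, (ii) exhibit the tangent map $T_xu$ in the factorised form (\ref{Eq:lsolution}) by using hypothesis (\ref{eq:00}), and (iii) feed this factorisation into (\ref{Eq:InPDE02}) and cancel everything against the wave relation (\ref{Rel:GammaDelta}).

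First, I would apply an implicit function theorem argument of the same type as the one carried out in Section \ref{Se:RI}. Consider the map $G:\mathbb{R}^{p+q+k}\rightarrow \mathbb{R}^{q+k}$ defined by
$$
G(x,u,\tau)=\bigl(u-f(\tau),\,\tau^\alpha-\varphi^\alpha(x,f(\tau))\bigr).
$$
Its partial Jacobian with respect to $(u,\tau)$ at the origin is precisely the invertibility condition built into the last hypothesis of the theorem, so $G=0$ can be solved locally as $u=u(x),\ \tau=\tau(x)$ on a neighbourhood of $x=0$. Since both $f$ and the $\varphi^\alpha$ are smooth, so are $u(x)$ and $\tau(x)$, and differentiation of $u(x)=f(\tau(x))$ gives the chain-rule identity
$$
d_xu(x)=\frac{\partial f}{\partial \tau^\alpha}(\tau(x))\otimes d_x\tau^\alpha(x).
$$

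Second, and this is the crux of the argument, I would invoke the factorisation hypothesis (\ref{eq:00}) to replace the right-hand side by $\gamma_{(\alpha)}\otimes\lambda^{(\alpha)}$, obtaining
$$
\frac{\partial u^\beta}{\partial x^i}(x)=\sum_{\sigma=1}^{k}\gamma_{(\sigma)}^\beta(x,u(x))\,\lambda_i^{(\sigma)}(x,u(x)).
$$
This is exactly the parametrisation (\ref{Eq:lsolution}) with $\xi^\sigma\equiv 1$, and it has rank $k$ because $\gamma_{(1)}\wedge\cdots\wedge\gamma_{(k)}\neq 0$ and $\lambda^{(1)}\wedge\cdots\wedge\lambda^{(k)}\neq 0$. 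Consistency with hypotheses (1) and (2) is automatic: the coefficients $\mu_\alpha^{\alpha'}$ of (\ref{Eq:Sur}) and the differentials $d_x\tau^\alpha$ arising from the implicit equations are linked precisely through (\ref{eq:00}), so no contradiction arises between the intrinsic description of $S$ and the extrinsic description of the solution.

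Third, substituting the factorised derivative into (\ref{Eq:InPDE02}) yields
$$
\sum_{i=1}^{p}\sum_{\beta=1}^{q}A^{i\alpha}_\beta(x,u(x))\,u_i^\beta(x)=\sum_{\sigma=1}^{k}\sum_{i=1}^{p}\sum_{\beta=1}^{q}A^{i\alpha}_\beta(x,u(x))\,\gamma_{(\sigma)}^\beta(x,u(x))\,\lambda_i^{(\sigma)}(x,u(x))=0,
$$
where the final equality is the wave relation (\ref{Rel:GammaDelta}) applied term by term in $\sigma$. This closes the verification and shows that $u(x)=f(\tau^1(x),\ldots,\tau^k(x))$ is indeed a $k$-wave solution of (\ref{Eq:InPDE02}).

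The main obstacle I expect is the first, implicit-function-theorem step: stating the regularity of the Jacobian of $G$ in a way that does not silently presuppose the absence of the gradient catastrophe detected in the simple-wave case. One must interpret the informal ``solvability'' hypothesis as exactly this invertibility condition, ideally by producing a block matrix analogous to the $\Psi$ of Section \ref{Se:lw} and checking that hypotheses (1)--(3) are mutually compatible so that $\tau^1(x),\ldots,\tau^k(x)$ can really be chosen as smooth functions of $x$ with $d_x\tau^1\wedge\cdots\wedge d_x\tau^k\neq 0$, which is what licences the appeal to (\ref{eq:00}) in step two.
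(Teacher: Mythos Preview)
Your proposal is correct and follows essentially the same route as the paper: define the implicit map, apply the implicit function theorem, compute $d_xu$ by the chain rule, invoke the factorisation (\ref{eq:00}), and cancel against the wave relation (\ref{Rel:GammaDelta}). The paper carries out explicitly the block-matrix step you anticipate in your last paragraph --- it writes the $(q+k)\times(q+k)$ Jacobian, row-reduces it to $\det\!\bigl({\rm Id}-\tfrac{\partial\varphi^\sigma}{\partial u^\alpha}\tfrac{\partial f^\alpha}{\partial\tau^{\sigma'}}\bigr)$, and then uses $d_x\tau^{\sigma'}=\bigl[\delta^{\sigma'}_\sigma-\tfrac{\partial\varphi^\sigma}{\partial u^\alpha}\tfrac{\partial f^\alpha}{\partial\tau^{\sigma'}}\bigr]\lambda^{(\sigma)}$ together with $\lambda^{(1)}\wedge\cdots\wedge\lambda^{(k)}\neq 0$ to identify this determinant condition with the functional independence $d_x\tau^1\wedge\cdots\wedge d_x\tau^k\neq 0$ of assumption~3 --- but this is exactly the computation you flagged as the remaining obstacle, so your plan and the paper's execution coincide.
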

\begin{proof}
	Using our assumptions, we obtain that
	$$
	d_xu(x)=\frac{\partial f}{\partial \tau^\alpha}\otimes d_x\tau^\alpha=\mu_\alpha^{\alpha'}\gamma_{(\alpha')}\otimes d_x\tau^\alpha=\gamma_{(\alpha)}\otimes \lambda^{(\alpha)}.
	$$	
	Consider the function $\Psi:\mathbb{R}^{p+q+k}\rightarrow \mathbb{R}^{q+k}$ given by
	$$
	\Psi^\sigma(x,\tau,u)=\tau^\sigma-\varphi^\sigma(x,u),\qquad \sigma=1,\ldots,k,
	$$
	$$
	\Psi^{\beta+k}(x,\tau,u)=u^\beta-f^\beta(\tau^1,\ldots,\tau^k),\qquad \beta=1,\ldots,q.
	$$
	Then, the condition $\Psi(x,\tau,u)=0$ allows us to determine $\tau=\tau(x),u=u(x)$ via the Implicit Function Theorem, when  the determinant of the $(q+k)\times (q+k)$ matrix
	$$
	A:=\left[\begin{array}{c|c}
		{\rm Id}_{q\times q}&-\frac{\partial f^{\alpha}}{\partial \tau^\sigma}\\
		\hline
		-\frac{\partial \varphi^{\sigma}}{\partial u^\alpha}&{\rm Id}_{k\times k}
	\end{array}\right]
	$$
	is assumed to be different from zero. By multiplying $A$ by 
	$$
	\left[\begin{array}{c|c}
		{\rm Id}_{q\times q}&\frac{\partial f^\alpha}{\partial \tau^{\sigma'}}\\
		\hline
		0&{\rm Id}_{k\times k}
	\end{array}\right]
	$$
	on the left, we obtain that 
	$$
	\det A=\det \left[\begin{array}{c|c}
		{\rm Id}_{q\times q}-\frac{\partial \varphi^\sigma}{\partial u^\alpha}\frac{\partial f^\alpha}{\partial \tau^{\sigma'}}&0\\
		\hline
		-\frac{\partial \varphi^{\sigma'}}{\partial u^\sigma}&{\rm Id}_{k\times k}
	\end{array}\right],$$
	where we assume 
	$$
	\det\left({\rm Id}_{q\times q}-\frac{\partial \varphi^\sigma}{\partial u^\alpha}\frac{\partial f^\alpha}{\partial \tau^\sigma}\right)\neq 0
	$$
	to exclude the gradient catastrophe. Recall that from the second relation in (\ref{Eq:Cases}), one has that
	$$
	d_x\tau^{\sigma'}=\left[\delta^{\sigma'}_\sigma-\frac{\partial \varphi^\sigma}{\partial u^\alpha}\frac{\partial f^\alpha}{\partial \tau^{\sigma'}}\right]\lambda^{(\sigma)},
	$$
	where $d_x\tau^1\wedge\ldots\wedge d_x\tau^q\neq 0$ because of assumption 3. 
	Hence, $\det A\neq 0$ since  $\lambda^{(1)}\wedge \ldots\wedge \lambda^{(k)}\neq 0$.  Using the definition of characteristic vectors, we obtain that $f(\tau^1(x),\ldots,\tau^k(x))$ satisfies equation (\ref{Eq:InPDE02}), which completes the proof.
\end{proof}

Surfaces of the form $u=f(\tau^1,\ldots,\tau^k)$ are called {\it hodograph surfaces} immersed in $\mathbb{R}^q$ and the function $f(\tau^1,\ldots,\tau^k)$ is called a {\it hodograph function}. When the conditions (\ref{Eq:Sur}) and (\ref{Eq:Con5}) are also satisfied, then we say that the surface is {\it involutive}. The functions $\tau^1,\ldots,\tau^k$ are called {\it generalised Riemann invariants}.

Let us prove a relevant consequence of the existence of solutions of (\ref{Eq:lsolution}), understood as a first-order system of partial differential equations in normal form for $u(x)$. As $u(x)=f(\varphi(x,u))$, where $\varphi(x,u)=(\varphi^1(x,u),\ldots,\varphi^k(x,u))$, one has that
$$
d_xu^\gamma(x)=\frac{\partial f^\gamma}{\partial \varphi^a}d_x\varphi^a+\frac{\partial \varphi^a}{\partial u^\beta}d_xu^\beta(x)\frac{\partial f^\gamma}{\partial \varphi^a}.
$$
Then
$$
d_xu^\gamma(x)=(\Psi^{-1})^\gamma_\beta\frac{\partial f^\beta}{\partial \varphi^j}d_x\varphi^j,
$$
where 
$$
\Psi^\alpha_\beta=\delta^\alpha_\beta-\frac{\partial \varphi^a}{\partial u^\beta}\frac{\partial f^\alpha}{\partial \varphi^a}.
$$
If $\langle\lambda^{(a)}(x,u),\vartheta(x,u)\rangle=0$ for every $a=1,\ldots,k$, then
$$
\langle d_xu^\gamma(x),\vartheta(x,u)\rangle=(\Psi^{-1})^\gamma_\beta\frac{\partial f^\beta}{\partial \varphi^i}\langle d_x\varphi^i,\vartheta\rangle=0. 
$$
Hence, according to (\ref{Eq:LambdaAlpha}), the directions in $\bigcap_{a=1}^k \ker \lambda^{(a)}_x$ give curves along which $u(x)$ takes a constant value. 

From the proof of Theorem \ref{Th:primer}, we obtain
$$
\mu^{\alpha'}_{\alpha}d\tau^\alpha=\lambda^{(\alpha')},\qquad \forall \alpha'=1,\ldots,k.
$$
Hence, $\lambda^{(\sigma)}=\lambda^{(\sigma)}(x,f(\tau^1,\ldots,\tau^k))$ for $\alpha=1,\ldots,k$. From the integrability conditions (\ref{Eq:Con2}), it follows that
$$
\frac{\partial \lambda^{(\sigma)}}{\partial \tau^{\sigma'}}=g^{\sigma}_{\sigma'}\lambda^{(\sigma)}+g^{\sigma'}_{\sigma}\lambda^{(\sigma')},\qquad \sigma\neq \sigma',
$$
are conditions together with (\ref{Eq:Con5}) for the existence of 
$k$-waves solutions of (\ref{Eq:InPDE02}). 

\section{A geometric approach to hydrodynamic equations}\label{Se:GA}

Let us provide a geometric interpretation of the hydrodynamic equations (\ref{Eq:InPDE02}) for which 
we assume that the 
solutions satisfy 
\begin{equation}\label{Eq:FirstTu}
	T_xu=
	\xi^a(x)\gamma_{(a)}(x,u)\otimes \lambda^{(a)}(x,u).
\end{equation}
It is worth recalling that $T_xu$  stands for the tangent map to $u:x\in \mathbb{R}^p\mapsto u(x)\in \mathbb{R}^q$ at $x\in \mathbb{R}^p$. Recall again that, in the literature about hydrodynamic equations, $T_xu$ is more commonly denoted by $du$. Unfortunately, $du$ is misleading from a geometric point of view, as $d$ can be understood as a generalisation of the exterior differential on a manifold whose meaning need not be unique or even properly defined on a map $u:\mathbb{R}^p\rightarrow \mathbb{R}^q$.

Substituting (\ref{Eq:FirstTu})  into (\ref{Eq:InPDE02}), skipping the dependence on independent and/or dependent variables, and taking into account our previous remarks, we obtain
\begin{equation}\label{Eq:Hyd}
	A^{i\alpha}_\beta\xi^a\gamma_{(a)}^\beta\lambda^{(a)}_i=0,\qquad \alpha=1,\ldots,q.
\end{equation}
Since we want our solutions to remain valid for every value of the functions $\xi^1,\ldots,\xi^k$ at a point, it follows that
\begin{equation}\label{Eq:Hyd2}
	A^{i\alpha}_\beta\gamma_{(a)}^\beta\lambda^{(a)}_i=0,\qquad \alpha=1,\ldots,q,\quad a=1,\ldots,k.
\end{equation}

Recall that the $\lambda_i^{(a)}$, with $i=1,\ldots,p$, are the coordinates of each $\lambda^{(a)}$ while $\gamma_{(a)}^\beta$, for $\beta=1,\ldots,q$, are the coordinates of each $\gamma_{(a)}$. Let us rewrite (\ref{Eq:Hyd}) more geometrically by defining some new associated geometric structures. First, let us define an $\mathbb{R}^q$-parametrised family of differential one-forms on $\mathbb{R}^p$ taking values in $\mathbb{R}^k$ of the form
\begin{equation}
	\lambda\!\!\!\!\lambda=
	\lambda^{(a)}_idx^i\otimes e_{(a)},
\end{equation}
where $\{e_{(1)},\ldots,e_{(k)}\}$ is a basis of $\mathbb{R}^k$. The space of such differential forms can be denoted by $C^\infty(\mathbb{R}^q)\otimes \Omega^1(\mathbb{R}^p)\otimes \mathbb{R}^k$. 
Additionally, we define a family $ \mathfrak{X}\!\!\!\!\mathfrak{X}_a^1,\ldots,\mathfrak{X}\!\!\!\!\mathfrak{X}_a^q$, with $a=1,\ldots,k$, of $\mathbb{R}^q$-parametrised vector fields on $\mathbb{R}^p$ taking values in $(\mathbb{R}^{k})^*$ given by
\begin{equation}\label{Eq:Xaa}
	\mathfrak{X}\!\!\!\!\mathfrak{X}_a^\alpha=
	A^{i\alpha}_\beta\gamma_{(a)}^\beta \frac{\partial}{\partial x^i}\otimes e^{(a)},\qquad \alpha=1,\ldots,q,\quad a=1,\ldots,k,
\end{equation}
where $\{e^{(1)},\ldots,e^{(k)}\}$ stands for the dual basis of $\{e_{(1)},\ldots,e_{(k)}\}$. It is worth noting that the above expression is not summed over the values of the subindex $a$.
Then, it makes sense to define the contraction of $\lambda\!\!\!\lambda$ and $\xi^a\mathfrak{X}\!\!\!\!\mathfrak{X}_a^\alpha$, where the sum is over $a=1,\ldots,k$, which makes the hydrodynamic equations  look like 
\begin{equation}\label{Eq:contrac}
	\left\langle \lambda\!\!\!\!\lambda,
	\xi^a\mathfrak{X}\!\!\!\!\mathfrak{X}_a^\alpha\right\rangle=0,\qquad \forall \alpha=1,\ldots,q.
\end{equation}
Since we expect our solution to be valid for all possible values of $\xi^1,\ldots,\xi^k$ at an arbitrary point, it follows that
\begin{equation}\label{Eq:Hydro}
	\langle \lambda\!\!\!\!\lambda,\mathfrak{X}\!\!\!\!\mathfrak{X}_a^\alpha\rangle=0,\qquad \forall \alpha=1,\ldots,q,\quad \forall a=1,\ldots,k.
\end{equation}
We are especially interested in the case when $ \lambda\!\!\!\!\lambda$ is an exact differential form, i.e. there exists a $\mathbb{R}^{q}$-parametrised zero-form $\Upsilon\!\!\!\!\!\Upsilon$ on $\mathbb{R}^p$ taking values in $\mathbb{R}^k$ such that
\begin{equation}\label{eq:gammaUpsilon}
	\lambda\!\!\!\!\lambda=d\Upsilon\!\!\!\!\!\Upsilon.
\end{equation}
Let us propose a method for obtaining $ \lambda\!\!\!\!\lambda$ while we ensure that a $k$-wave solution exists. Consider the distribution on $\mathbb{R}^p$, parametrised by elements of $\mathbb{R}^q$, spanned by $\mathfrak{X}\!\!\!\!\mathfrak{X}_a^1,\ldots,\mathfrak{X}\!\!\!\!\mathfrak{X}_a^q$ for $a=1,\ldots,k$. 
Assume that $\gamma_{(1)},\ldots,\gamma_{(k)}$ are chosen so that the $\mathfrak{X}\!\!\!\!\mathfrak{X}_a^\alpha$, for $\alpha=1,\ldots,q$ and $a=1,\ldots,k$, span an involutive distribution on $\mathbb{R}^p$ for every value $u\in \mathbb{R}^q$. In this case, we can choose, for every $u\in \mathbb{R}^q$, certain $\mathbb{R}^q$-dependent functions $\Upsilon_1,\ldots,\Upsilon_k$ on $\mathbb{R}^p$ that are constant on the leaves of the involutive distribution (at least locally at a generic point). Hence, (\ref{Eq:Hydro}) follows. This has to be done in such a way that the $\gamma_{(1)},\ldots\gamma_{(k)}$ span an integral distribution on $\mathbb{R}^q$ for every point in $\mathbb{R}^p$. We recall that a $k$-vector field is said to be integrable when its components span an integrable distribution. Hence, one can say that we are interested in an integrable $k$-vector field on $\mathbb{R}^q$  of the form
\begin{equation}\label{Eq:gammae}
	\gamma\!\!\!\gamma=
	\gamma_{(a)}\otimes e^{(a)},
\end{equation}
for every fixed value of $x\in \mathbb{R}^p$. Moreover, we expect that, in this  case, we can consider  for every value of $u\in\mathbb{R}^q$  the leaves of the induced distribution on $\mathbb{R}^p$ spanned by $\mathfrak{X}\!\!\!\!\mathfrak{X}_a^1,\ldots,\mathfrak{X}\!\!\!\!\mathfrak{X}_a^q$ for $a=1,\ldots,k$. Hence, we can define a family of functions vanishing on them. This ensures that $d\Upsilon\!\!\!\!\!\Upsilon$ vanishes on the $\mathfrak{X}\!\!\!\!\mathfrak{X}^1_a,\ldots,\mathfrak{X}\!\!\!\!\mathfrak{X}^q_a$ with $a=1,\ldots,k$. This is to be done in such a way that the vector fields $\gamma\!\!\!\!\gamma$ span an involutive distribution on $\mathbb{R}^q$ for every fixed $x$. 

\begin{prop}
	Let $\mathbb{R}^k\ni\tau\mapsto f(\tau)\in \mathbb{R}^q$ be an immersed submanifold $S$ in $\mathbb{R}^q$. Consider the  $\mathbb{R}^q$-parametrised vector fields on $\mathbb{R}^p$ taking values in $(\mathbb{R}^k)^*$ of the form 
	$$
	\mathfrak{X}\!\!\!\!\mathfrak{X}_a^\alpha = A_{\beta}^{i\alpha}\gamma_{(a)}^\beta\frac{\partial}{\partial x^i}\otimes e^{(a)},\qquad \alpha=1,\ldots,q,\qquad a=1,\ldots,k,
	$$
	where the $A^{i\alpha}_\beta$ appear in (\ref{Eq:InPDE02}). Suppose that there exists a $\mathbb{R}^q$-parametrised zero-form $\Upsilon\!\!\!\!\!\Upsilon=\sum_{a=1}^kT^a\otimes e_{(a)}$ on $\mathbb{R}^p$ taking values in  $\mathbb{R}^k$ such that $\lambda\!\!\!\!\lambda =d\Upsilon\!\!\!\!\!\Upsilon$ and assume that the $\gamma_{(a)}=\gamma_{(a)}(x,u)$ take a constant value on the submanifolds $N_{\vec{a},u}$. Assume also that each $\partial f/\partial T^s$ is proportional to $\gamma_{(s)}$ and the conditions (\ref{Eq:Con2}) are satisfied. Then,  $u(x)=f(T^1(x),\ldots,T^k(x))$ constitutes an exact Riemann $k$-wave solution of the system (\ref{Eq:lsolution}). 
\end{prop}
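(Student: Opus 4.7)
The plan is to reduce this proposition to Theorem \ref{Th:primer} by translating the geometric data of Section \ref{Se:GA} into the analytic hypotheses of that theorem. I would first identify $\varphi^a(x,u) := T^a(x,u)$ for $a=1,\ldots,k$, so that the hypothesis $\lambda\!\!\!\!\lambda = d\Upsilon\!\!\!\!\!\Upsilon$ immediately yields $\lambda^{(a)} = d_x T^a = d_x\varphi^a$, which, after composition with the ansatz $u=f(\tau)$, matches (\ref{eq:0}). The assumption that $\partial f/\partial T^s$ is proportional to $\gamma_{(s)}$ (no sum) gives, after rescaling the $\gamma_{(s)}$ by scalar factors (which is permissible thanks to the scaling freedom in the parametrisation (\ref{Eq:lsolution}) and by the rescaling argument already used in Section \ref{Se:genkwave}), a diagonal special case of (\ref{Eq:Sur}), establishing hypothesis (1) of Theorem \ref{Th:primer}.

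Next, the factorial form (\ref{eq:00}) follows from a single chain-rule computation,
$$
d_x u = \frac{\partial f}{\partial T^\alpha}\otimes d_x T^\alpha = \gamma_{(\alpha)}\otimes\lambda^{(\alpha)},
$$
giving hypothesis (3). For hypothesis (2), conditions (\ref{Eq:Con5}) must be verified: this is precisely the assumed (\ref{Eq:PLS})-form of (\ref{Eq:Con2}) expressed in the parametrisation afforded by the exact form $\lambda^{(a)}=d_x T^a$, combined with the fact that the constancy of $\gamma_{(a)}$ on the submanifolds $N_{\vec{a},u}$ forces $\gamma_{(a)}$ to depend on the independent variables only through $(T^1,\ldots,T^k)$, so that differentiations along $\tau^m$ are well-defined and reproduce (\ref{Eq:Con5}). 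The wave relation (\ref{Rel:GammaDelta}) needed for Theorem \ref{Th:primer} appears automatically from the geometric setup: by the very definition of $\mathfrak{X}\!\!\!\!\mathfrak{X}_a^\alpha$ one has $\langle\lambda\!\!\!\!\lambda,\mathfrak{X}\!\!\!\!\mathfrak{X}_a^\alpha\rangle = \lambda^{(a)}_i A^{i\alpha}_\beta \gamma_{(a)}^\beta$ (no sum on $a$), and this vanishes because $\lambda\!\!\!\!\lambda=d\Upsilon\!\!\!\!\!\Upsilon$ annihilates the distribution on $\mathbb{R}^p$ whose leaves are the level sets $N_{\vec{a},u}$ of $T^1,\ldots,T^k$.

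With all the hypotheses of Theorem \ref{Th:primer} in place, the implicit system
$$
u = f(\tau), \qquad \tau^\alpha = T^\alpha(x, f(\tau)),\qquad \alpha=1,\ldots,k,
$$
can be solved locally for $u(x)$ and $\tau(x)$ outside the gradient-catastrophe hypersurface, and the theorem yields that $u(x)=f(T^1(x),\ldots,T^k(x))$ is a Riemann $k$-wave solution of (\ref{Eq:InPDE02}). The principal obstacle lies in the bookkeeping between two kinds of involutivity: the involutivity of the distribution spanned by the $\mathfrak{X}\!\!\!\!\mathfrak{X}_a^\alpha$ on $\mathbb{R}^p$ (underlying the existence of the potential $\Upsilon\!\!\!\!\!\Upsilon$ with $\lambda\!\!\!\!\lambda=d\Upsilon\!\!\!\!\!\Upsilon$) and the involutivity of the $\gamma_{(a)}$-distribution on $\mathbb{R}^q$ (underlying the parametrisation $f$ of $S$). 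Once the constancy of $\gamma_{(a)}$ on $N_{\vec{a},u}$ is used to bridge these two conditions, the remainder of the argument is routine chain rule together with the implicit function theorem, exactly as in the proof of Theorem \ref{Th:primer}.
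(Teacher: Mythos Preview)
The paper states this proposition without proof; only a one-line remark follows (``Note that the $T^s$ are first-integrals of the vector fields $\mathfrak{X}\!\!\!\!\mathfrak{X}^\alpha_a$''), and then Section~\ref{Se:Appl} begins. So there is no argument in the paper against which to compare yours.

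Your reduction to Theorem~\ref{Th:primer} is the natural route and is essentially sound: identifying $\varphi^a=T^a$ gives (\ref{eq:0}); the assumption $\partial f/\partial T^s\propto\gamma_{(s)}$ is the diagonal case of (\ref{Eq:Sur}); the chain rule yields (\ref{eq:00}); and (\ref{Eq:Con5}) is, as the paper itself remarks just before Theorem~\ref{Th:primer}, the parametric form of the assumed condition (\ref{Eq:Con2}). One point deserves tightening: your justification of the wave relation (\ref{Rel:GammaDelta}) is phrased circularly --- saying that $d\Upsilon\!\!\!\!\!\Upsilon$ annihilates the level sets of $T^1,\ldots,T^k$ is a tautology and does not by itself explain why the $\mathfrak{X}\!\!\!\!\mathfrak{X}^\alpha_a$ lie in those level sets. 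The correct reading is that, in the context of Section~\ref{Se:GA}, the symbol $\lambda\!\!\!\!\lambda$ already denotes the object satisfying (\ref{Eq:Hydro}); the proposition's hypothesis ``$\lambda\!\!\!\!\lambda=d\Upsilon\!\!\!\!\!\Upsilon$'' is therefore the assumption that \emph{this} $\lambda\!\!\!\!\lambda$ admits a potential, and (\ref{Rel:GammaDelta}) is then simply (\ref{Eq:Hydro}) unpacked. Equivalently, the post-proposition remark that the $T^s$ are first integrals of the $\mathfrak{X}\!\!\!\!\mathfrak{X}^\alpha_a$ is the missing link in your sentence. With that clarification your argument goes through.
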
	

Note that the $T^s$ are first-integrals of the vector fields $\mathfrak{X}\!\!\!\!\mathfrak{X}^\alpha _a$. As shown in   the examples of the next section, the existence of the functions $T^s$ imply, in some cases, that the $\mathfrak{X}\!\!\!\!\mathfrak{X}^\alpha _a$ span an involutive distribution.

\section{Examples}\label{Se:Appl}

Now we shall present a few examples which illustrate our theoretical considerations. 

$\bullet$ Let us return to the Brownian motion equation (\ref{Ex:Examples}) with $\kappa=1$. In this case, the matrix related to $T_xu$ is given by
$$
[T_xu]=\left[ \begin{matrix} \frac {\partial \widetilde{a}} {\partial t}& \frac {\partial \widetilde{a}} {\partial x}& \frac {\partial \widetilde{a}} {\partial y}\\ \frac {\partial \widetilde {c}} {\partial t}& \frac {\partial \widetilde {c}} {\partial x}& \frac {\partial \widetilde{c}} {\partial y}\end{matrix} \right] =\xi \left[ \begin{matrix} \gamma^{1}\\ \gamma^{2}\end{matrix} \right] \otimes \left[ \begin{matrix} \lambda _{1}& \lambda _{2}& \lambda _{3}\end{matrix} \right] =\xi \left[ \begin{matrix} \gamma^{1}\lambda _{1}& \gamma ^{1}\lambda _{2}& \gamma^{1}\lambda _{3}\\ \gamma ^{2}\lambda _{1}& \gamma ^{2}\lambda_{2}& \gamma^{2}\lambda _{3}\end{matrix} \right],$$
where we recall that $a=u_t$ and $c=u_x$. 
Meanwhile, we are interested in solving the  equation
$$
\lambda_{1}\left[\begin{array}{cc} 0& 0\\ 
	0& -1\end{array} \right] \left[ \begin{array}{c} \gamma^1\\ \gamma^2\end{array} \right] _{t}+\lambda _{2}\left[ \begin{array}{cc} 0&\frac {1+\beta^{2}x^{2}} {\widetilde{a}+y}\\1& 0 \end{array} \right] \left[\begin{array}{c} \gamma^1\\ \gamma^2\end{array}\right] _{x}+\lambda_3\left[ \begin{matrix} 1& 0\\ 0& 1\end{matrix} \right] \left[ \begin{array}{c} \gamma^1\\ \gamma^2\end{array} \right]_y \!\!=\!\!\left[ \begin{matrix} 0\\ 0\end{matrix} \right].
$$
and we therefore analyse the vector fields
$$
X_1=\frac{1+\beta^2x^2}{\widetilde{a}+y}\gamma^2\frac{\partial}{\partial x}+\gamma^1\frac{\partial}{\partial y},\qquad X_2=-\gamma^2\frac{\partial}{\partial t}+\gamma^1\frac{\partial}{\partial x}+\gamma^2\frac{\partial}{\partial y}.
$$
Note that, indeed, if we want $\varphi(t,x,y,u)$ to exist, the vector fields $X_1,X_2$ must be such that their Lie bracket belongs to the distribution on $\mathbb{R}^3$ spanned, at every $u\in \mathbb{R}^2$ by $X_1,X_2$. Consider then
$$
[X_1,X_2]=-X_1\gamma^2\frac{\partial}{\partial t}+\left(X_1\gamma^1-X_2\frac{1+\beta^2x^2}{a}\right)\frac{\partial}{\partial x}+(X_1\gamma^2-X_2\gamma^1)\frac{\partial}{\partial y}
$$
If $\gamma^2=0$ and $\gamma^1\neq 0$, then the distribution generated by $X_1,X_2$ for every value of $\widetilde{a},\widetilde{c}$ is involutive as well as the distribution spanned by $\gamma$. Indeed, the obtained distribution reads
$$
\left\langle \frac{\partial}{\partial x},\frac{\partial}{\partial y}\right\rangle\Rightarrow \Upsilon=t.
$$
Hence, $\lambda=d_{(t,x,y)}t=dt$.  Recall that $\gamma^1,\gamma^2$ must be constant on the submanifolds where $\Upsilon$ takes a constant value. Then, $\gamma^1=\gamma^1(t)$. Since $\gamma$ is a unique vector, we do not have any additional integrability condition. Solutions can be obtained by setting
$$
u=\left(\int \gamma^1(t)dt,\widetilde{c}_0\right)^T.
$$

It is worth noting that this is not the only possible choice for $\Upsilon$. 
Hence, we obtain a simple wave. 

$\bullet$ Let us analyse a simple hydrodynamical type equation given by 
\begin{equation}\label{Eq:Example3}
	\frac{\partial u}{\partial t}+xu\frac{\partial u}{\partial x}+yu^2\frac{\partial u}{\partial y}=0,
\end{equation}
whose solutions are of the form $u:\mathbb{R}^3\rightarrow\mathbb{R}$.
As previously, let us parametrise the tangent map to the solution $u(p)$ in the form
$$
d_pu(p)=\gamma(t,x,y,u)\otimes \lambda(t,x,y,u)
$$
to look for simple waves of (\ref{Eq:Example3}). Then, $\lambda=(\lambda_1,\lambda_2,\lambda_3)$ and its associated $\gamma$ satisfy the relation
\begin{equation}\label{Eq:DisRe}
	(\lambda_1+xu\lambda_2+yu^2\lambda_3)\gamma=0.
\end{equation}
Let us determine the values of $\lambda =\lambda_1dt+\lambda_2dx+\lambda_3dy$ that give a solution of (\ref{Eq:DisRe}) for any arbitrary associated $\gamma$. In this case,
$$
\lambda_1=-xu\lambda_2-yu^2\lambda_3.
$$
Hence, $\lambda$ takes the form
$$
\lambda=(-xudt+dx)\lambda_2+(-yu^2dt+dy)\lambda_3
$$
for arbitrary values of $\lambda_2,\lambda_3$. To apply our techniques, we require $d_p\lambda=0$. In particular, let us assume $\lambda_2=m/x$ and $\lambda_3=k/y$ for any pair of constants $m,k\in \mathbb{R}$ such that $m^2+k^2\neq 0$. Hence,
$$
\lambda=\left(-udt+\frac{dx}{x}\right)m+\left(-u^2dt+\frac{dy}{y}\right)k
$$
satisfies $d_p\lambda=0$ at $p=(t,x,y)$. Hence, we can ensure
$
\lambda=d_pR
$
for
$$
R=-t(um+u^2k)+\ln|x^my^k|.
$$
Assuming that $\gamma$ is a constant on the submanifolds of $\mathbb{R}^3$, where $R$ takes a constant value, a solution of (\ref{Eq:Example3})  can be written as $u=f(R)$, where $f(R)$ is a solution of 
\begin{equation}\label{Eq:Conu}
	\frac{df}{dR}=\gamma(R,u).
\end{equation}
Since $\gamma$ is an arbitrary function of the form $\gamma(R,u)$, a solution can be written as
$$
u=f(R)
$$
for an appropriate function $\gamma(R,u)$. With respect to $R$, let us recall that equation (\ref{Eq:eqeq}) can be determined in an implicit manner by
the equation 
$$
R=-t(f(R)m+f(R)^2k)+\ln|x^my^k|
$$
for a solution $f$ of (\ref{Eq:Conu}). In other words, the above expression determines in an implicit manner the value of $R$ as a function of $t,x,y$.  Then, in an implicit manner, 
$$
u=f(-t(um+u^2k)+\ln|x^my^k|)
$$
describes the solutions of (\ref{Eq:Example3}) determined by generalised Riemann invariants. Let us provide a more particular case. Consider $\gamma=c$, which takes a constant value on the submanifolds of $\mathbb{R}^3$, where $R$ takes a constant value. Hence, $f(R)=c R$ and 
$$
R=-t(c Rm+c^2R^2k)+\ln|x^my^k|,
$$
which, assuming that $k^2+c^2\neq 0$,  gives that
$$R=-\frac{\sqrt{4 c ^2 k t \log \left(x^m y^k\right)+(cm    t+1)^2}+c  tm+1}{2 c^2  k t},
$$
and
$$ u=f(R(t,x,y))=-\frac{\sqrt{4 c^2 k t \log \left(x^m y^k\right)+(c tm+1)^2}+c tm+1}{2 c k t},$$
which can be proved to be an explicit  solution of (\ref{Eq:Example3}). 

$\bullet$ Let us analyse the hydrodynamic system of partial differential equations with two dependent and three independent variables of the form
\begin{equation}\label{Eq:Example2}
	\begin{cases}
		\frac{\partial u_1}{\partial t}+x\frac{\partial u_2}{\partial x}+yu_1\frac{\partial u_2}{\partial y}=0,\\
		\frac{\partial u_2}{\partial t}+xu_2\frac{\partial u_1}{\partial x}+y\frac{\partial u_1}{\partial y}=0.\\
	\end{cases}
\end{equation}
We have the parametrization
$$
[T_pu]=\left[ \begin{matrix} \frac {\partial u_1} {\partial t}& \frac {\partial u_1} {\partial x}& \frac {\partial u_1} {\partial y}\\ \frac {\partial u_2} {\partial t}& \frac {\partial u_1} {\partial x}& \frac {\partial u_1} {\partial y}\end{matrix} \right] =\xi \left[ \begin{matrix} \gamma^{1}\\ \gamma^{2}\end{matrix} \right] \otimes \left[ \begin{matrix} \lambda _{1}& \lambda _{2}& \lambda _{3}\end{matrix} \right] =\xi \left[ \begin{matrix} \gamma^{1}\lambda _{1}& \gamma ^{1}\lambda _{2}& \gamma^{1}\lambda _{3}\\ \gamma ^{2}\lambda _{1}& \gamma ^{2}\lambda_{2}& \gamma^{2}\lambda _{3}\end{matrix} \right].$$
The hydrodynamic equation (\ref{Eq:Example2}) in matrix form can be written as
$$
\lambda_{1}\left[\begin{array}{cc} 1& 0\\ 
	0& 1\end{array} \right] \left[ \begin{array}{c} \gamma^1\\ \gamma^2\end{array} \right] _{t}+\lambda _{2}\left[ \begin{array}{cc} 0&x\\u_2x& 0 \end{array} \right] \left[\begin{array}{c} \gamma^1\\ \gamma^2\end{array}\right] _{x}+\lambda_3\left[ \begin{matrix} 0& u_1y\\ y& 0\end{matrix} \right] \left[ \begin{array}{c} \gamma^1\\ \gamma^2\end{array} \right]_y =\left[ \begin{matrix} 0\\ 0\end{matrix} \right],
$$
where
$$
\gamma=\gamma^1\frac{\partial}{\partial u_1}+\gamma^2\frac{\partial}{\partial u_2}.
$$
Here, consider the problem of the construction of a Riemann double  wave. Let us consider the vector fields (\ref{Eq:Xaa}) for (\ref{Eq:Example2}) of the form $\mathfrak{X}\!\!\!\!\mathfrak{X}^1=X^1\otimes e^{(1)}$ and $\mathfrak{X}\!\!\!\!\mathfrak{X}^2=X^2\otimes e^{(1)}$, where
$$
X_1=\gamma^1\frac{\partial}{\partial t}+x\gamma^2\frac{\partial}{\partial x}+yu_1\gamma^2\frac{\partial}{\partial y},\qquad X_2=\gamma^2\frac{\partial}{\partial t}+xu_2 \gamma^1\frac{\partial}{\partial x}+y\gamma^1\frac{\partial}{\partial y}.
$$
More conveniently, let us define $\Delta=\gamma^1/\gamma^2$ and we verify that the vector fields
$$
Y_1=\Delta\frac{\partial}{\partial t}+x\frac{\partial}{\partial x}+yu_1\frac{\partial}{\partial y},\qquad Y_2=\frac{\partial}{\partial t}+xu_2\Delta\frac{\partial}{\partial x}+y\Delta\frac{\partial}{\partial y},
$$
span an involutive distribution of rank two, which implies that they have a common constant of the motion. Note that 
$$
[Y_1,Y_2]=-(Y_2\Delta)\frac{\partial}{\partial t}+(Y_1(u_2x\Delta)-Y_2x)\frac{\partial}{\partial x}+[Y_1(y\Delta)-Y_2(u_1y)]\frac{\partial}{\partial y}.
$$
Hence,
$$
[Y_1,Y_2]=-(Y_2\Delta)\frac{\partial}{\partial t}+(Y_1(xu_2\Delta)-Y_2x)\frac{\partial}{\partial x}+y(Y_1\Delta)]\frac{\partial}{\partial y}.
$$Consider $u_1>0$. This implies that if $\Delta=\pm\sqrt{u_1}$, we obtain that
$$
(u_1u_2-1)x\frac{\partial}{\partial x}=\pm\sqrt{u_1}Y_2-Y_1
$$
is proportional to $[Y_1,Y_2]$. Note also that such values of $\Delta_\pm$ allow us to define $\gamma^1,\gamma^2$ to be constant. Hence, they will be constant on the regions where $\tau_\pm=\tau_\pm(t,x,y)$ take a constant value, which allows us to satisfy condition (\ref{eq:gamma1}). Hence, a common constant of the motion for $Y_1,Y_2$ can be found. In particular, it is immediate that the functions corresponding to the relation (\ref{eq:gammaUpsilon}) become
$$
\Upsilon^\pm=t\mp\frac{\ln|y|}{\sqrt{u_1}}
$$
and are constants of the motion. Therefore, again by using (\ref{eq:gammaUpsilon}), we define
$$
\lambda^\pm=d_x\Upsilon^\pm=dt\mp\frac{dy}{y\sqrt{u_1}}.
$$
To find a simple wave, we consider a characteristic of the $\mathbb{R}^3$-parametrised vector field on $\mathbb{R}^2$ of the form
$$
\gamma_\pm=\gamma_\pm^1\frac{\partial}{\partial u_1}+\gamma_\pm^2\frac{\partial}{\partial u_2}
$$
Since $\Delta_\pm=\pm \sqrt{u_1}=\gamma^1_\pm/\gamma_\pm^2$, we choose $\gamma_\pm^1=\pm\sqrt{u_1}$ and $\gamma_\pm^2=1$. Then, we look for a solution of the characteristic system
$$
\frac{du_1}{d\tau}=\pm\sqrt{u_1},\qquad \frac{du_2}{d\tau}=1,
$$
which reads
$$
u_{1\pm}(\tau)=\left(\frac{\pm\tau_\pm-\tau_0}2\right)^2,\qquad u_{2\pm}(\tau)=\tau_\pm-\tau_0',
$$
for arbitrary $\tau_0,\tau_0'\in \mathbb{R}$.
The functions $\tau_\pm$ can be written as  functions of $t,x,y$ by solving the implicit system
$$
f_\pm(\tau_\pm)=(u_{1\pm}(\tau),u_{2\pm}(\tau)),\qquad \tau_\pm=\Upsilon_\pm(t,x,y,u_1,u_2)
$$
which give, for instance, the following solutions
$$
\tau_{\pm}=\frac{t\pm \tau_0-\sqrt{(t\mp\tau_0)^2-8\ln|y|}}{2}.
$$
Consequently, we obtain the explicit solutions of our original system of PDEs (\ref{Eq:Example2}) given by
$$
u_{1\pm}(t,x,y)=\frac{1}{4}\left(\pm\frac{ t\pm\tau_0-\sqrt{( t\mp\tau_0)^2-8\ln|y|}}{2}-\tau_0\right)^2,
$$
$$u_{2\pm}(t,x,y)=\frac{ t\pm\tau_0-\sqrt{( t\mp\tau_0)^2-8\ln|y|}}{2}-\tau_0',
$$
for arbitrary $\tau_0,\tau_0'\in \mathbb{R}$. 
It is worth remarking that, if $p$ stands for the variables in $\mathbb{R}^3$, then
\begin{multline*}
	d_{p}u_\pm(t,x,y)\!\!=\!\!\left[\begin{array}{c}\!\!\pm\frac{du_1}{d\tau^\pm}(\tau^\pm(t,x,y))\\\!\!\frac{du_2}{d\tau^\pm}(\tau^\pm(t,x,y))\end{array}\right]\otimes d_p\tau^\pm(t,x,y)\\=\!\left[\begin{array}{c}\!\!\pm\sqrt{u_1(t,x,y)}\\1\end{array}\right]\otimes d_p\tau^\pm(t,x,y),
\end{multline*}
where $p=(t,x,y)$. But then,
$$
{\small d_p\tau^\pm\!=\!\frac{\sqrt{(t\mp \tau_0)^2-8\ln |y|}-t\pm\tau_0}{2\sqrt{( t\mp t_0)^2-8\ln|y|}}\left(dt\!-\!\frac{4dy}{y( t\mp \tau_0-\sqrt{(t\mp\tau_0)^2-8\ln |y|})}\right)}
$$
and hence
$$
d_p \tau_\pm=\frac{\sqrt{(t\mp \tau_0)^2-8\ln |y|}-t\pm\tau_0}{2\sqrt{( t\mp t_0)^2-8\ln|y|}}\lambda^\pm(t,x,y,u_{1\pm}(t,x,y),u_{2\pm}(t,x,y)),
$$
as established in the proof of Proposition \ref{Eq:eqeq}. Therefore, $u_\pm(t,x,y)$ are simple wave solutions for (\ref{Eq:Example2}).  

Finally, observe that $\gamma_{\pm}=\pm \sqrt{u_1}\partial_{u_1}+\partial_{u_2},\lambda^{\pm}$ are such that 
$$
[\gamma^+,\gamma^-]=0,\qquad \gamma^+\wedge\gamma^-\neq 0,\qquad \lambda^+\wedge\lambda^-\neq 0,
$$
which allows us to obtain a superposition of simple waves giving rise to a double-wave solution with
$$
T_pu(t,x,y)=\xi^+\gamma_+\otimes \lambda^++\xi^-\gamma_-\otimes \lambda^-,
$$
for certain coefficients $\xi^+,\xi^-:\mathbb{R}^3\rightarrow \mathbb{R}$. 
To obtain a $2$-wave, we are going to consider the system of partial differential equations
$$
\frac{\partial u_1}{\partial \tau^+}=\frac 12\sqrt{u_1},\qquad \frac{\partial u_1}{\partial \tau^-}=-\frac 12\sqrt{u_1},\qquad \frac{\partial u_2}{\partial \tau^+}=\frac 12,\qquad \frac{\partial u_2}{\partial \tau^-}=\frac 12.
$$
This is indeed the coordinate representation of the system
$$
\frac{\partial u}{\partial \tau^+}=\frac 12\gamma_+,\qquad \frac{\partial u}{\partial\tau_-}=\frac 12\gamma_-.
$$
It is simple to see that a possible solution is given by
$$
u_1=\left(\frac{\tau^+-\tau^-}{4}\right)^2,\qquad u_2=\frac 12(\tau^++\tau^-).
$$
This allows us to define a mapping $f:\mathbb{R}^2\rightarrow \mathbb{R}^2$ of the form
$$
f(\tau^+,\tau^-)=\left(\left[\frac{\tau^+-\tau^-}{4}\right]^2,\frac{\tau^++\tau^-}2\right).
$$
Using $\tau_+=t+2\sqrt{-\ln|y|}$ and $\tau_-=t-2\sqrt{-\ln|y|}$, we obtain the particular double-wave solution
$$
u'(t,x,y)=(u_{1}(t,x,y),u_{2}(t,x,y))=(-\ln|y|,t), 
$$
which is indeed a double-wave solution of  (\ref{Eq:Example2}) obtained by a superposition of two single waves. In fact,
$$
d_p\tau^+=\lambda^+,\qquad d_p\tau^-=\lambda^-
$$
and
$$
[T_pu']=\left[ \begin{matrix}  0&0&-1/y\\ 1 &0& 0\end{matrix} \right]=\frac 12\gamma_+\otimes \lambda^++\frac 12 \gamma_-\otimes \lambda^- .
$$

\vskip .5cm
\noindent{\bf \Large Appendix (A modification of Frobenius theorem for integration).}
\vskip 0.5cm

Let us prove a result that seems to be missing in the literature despite its relation to the Frobenius Theorem and interest in the theory of hydrodynamic equations. Such a result was employed without proof in \cite{GL91} to study  the correspondence principle for $k$-waves. More specifically, it was stated in \cite{GL91} that the vector fields $\gamma_{(1)},\ldots,\gamma_{(r)}$ can be redefined to commute among themselves. Nevertheless, in the theory of  $k$-waves, the vector fields $\gamma_{(1)},\ldots,\gamma_{(r)}$ must satisfy the relation  (\ref{Rel:GammaDelta}). Hence, we are only interested in rescaling them by functions, which ensures that the rescaled vector fields still satisfy (\ref{Rel:GammaDelta}). Note that the Frobenius Theorem is not sufficient to prove the existence of an appropriate rescaling of  $\gamma_{(1)},\ldots,\gamma_{(r)}$ leading the latter to commute  among themselves. In fact,
the Frobenius Theorem proves that a family of vector fields $X_1,\ldots,X_r$ on a manifold $N$ satisfying $X_1\wedge\ldots\wedge X_r\neq 0$ and  spanning an involutive distribution of rank $k$ gives rise to an Abelian Lie algebra of vector fields  $\langle Y_1,\ldots,Y_r \rangle $ spanning the same distribution as $X_1,\ldots,X_r$ and such that
$$
Y_i=\sum_{j=1}^rf_{ij}X_j
$$
for a family of functions $f_{ij}\in C^\infty(N)$ with $i,j=1,\ldots,r$. Nevertheless, the generalised theory of $k$-waves demands that a simple rescaling of the vector fields $X_1,\ldots, X_r$ by a function must give rise to the family $Y_1,\ldots,Y_r$ of vector fields that commute among themselves. Meanwhile, the generalised theory of  $k$-waves also assumes a more restrictive form of the commutator between the vector fields  $\gamma_{(1)},\ldots,\gamma_{(r)}$. This restricted form is employed in the following theorem, which constitutes a modification of the Frobenius Theorem that is useful for our purposes. Note that, in the following theorem, for clarity, the Einstein convention is not used.

\begin{thm} {\bf (Modified Frobenius Theorem by rescaling)} Consider a family $X_1,\ldots,X_r$ of  vector fields on a $q$-dimensional manifold $N$ such that
	\begin{enumerate}
		\item  $X_1\wedge\ldots\wedge X_r\neq0$
		\item  $[X_i,X_j]=h_{i,j}^iX_i+h_{i,j}^jX_j$ 
	\end{enumerate} 
	for certain functions $h_{i,j}^k\in C^\infty(N)$ with $i,j,k=1,\ldots,r$. Then, there exist functions $f_1,\ldots,f_r\in C^\infty(N)$ such that $[f_iX_i,f_jX_j]=0$ for all $i,j=1,\ldots,r$. 
	
\end{thm}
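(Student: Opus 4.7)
The plan is to convert the desired commutation relations into an overdetermined first-order linear PDE system for the logarithms $g_j := \ln f_j$ of the rescaling functions, and then to verify that the Frobenius compatibility of that system is forced by the Jacobi identity applied to the triple $(X_i, X_k, X_j)$.

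First I would expand, using the Leibniz identity $[fX, hY] = fh[X,Y] + f(Xh)Y - h(Yf)X$, to obtain
\[
[f_iX_i, f_jX_j] \;=\; \bigl(f_if_j\, h_{i,j}^i - f_j\, X_jf_i\bigr)X_i \;+\; \bigl(f_if_j\, h_{i,j}^j + f_i\, X_if_j\bigr)X_j.
\]
Since $X_1\wedge\cdots\wedge X_r\neq 0$ (hence in particular $X_i\wedge X_j\neq 0$), and since we seek nowhere-vanishing $f_j$ so that $g_j := \ln f_j$ makes sense locally, the vanishing of this bracket for every pair $i\neq j$ is equivalent to the single family of scalar conditions
\[
X_i g_j \;=\; -h_{i,j}^j \qquad \text{for all } i\neq j.
\]
The condition obtained by interchanging $i\leftrightarrow j$ coincides with this one, using the antisymmetry $h_{j,i}^j = -h_{i,j}^j$ that follows from $[X_j,X_i] = -[X_i,X_j]$. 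So for each fixed $j$ the task reduces to solving an overdetermined system of $r-1$ linear PDEs in the single unknown $g_j$.

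Next I would verify the integrability of this system. The standard Frobenius-type compatibility for $X_i g = F_i$ is $X_i F_k - X_k F_i = [X_i, X_k]\,g$; substituting $F_i := -h_{i,j}^j$ and the bracket hypothesis, it takes the form
\[
X_k h_{i,j}^j - X_i h_{k,j}^j \;=\; -h_{i,k}^i\, h_{i,j}^j - h_{i,k}^k\, h_{k,j}^j, \qquad i,k\neq j.
\]
The \emph{main obstacle} is to show this identity holds automatically. The key observation is that it is precisely the $X_j$-component of the Jacobi identity
\[
[X_i,[X_k,X_j]] + [X_k,[X_j,X_i]] + [X_j,[X_i,X_k]] \;=\; 0,
\]
after using the hypothesis to expand each inner bracket as $h_{a,b}^aX_a+h_{a,b}^bX_b$, applying Leibniz to each outer bracket, and collecting the $X_j$-coefficient. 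The remaining $X_i$- and $X_k$-components produce analogous identities involving $h_{\bullet,j}^k$ and $h_{\bullet,j}^i$ that are not needed here; only the $X_j$-component matters, and its cross terms from $X_\bullet(h_{\bullet,\bullet}^\bullet)$ either cancel in pairs or collapse to the required identity by direct bookkeeping.

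Finally, with compatibility established, I would invoke Frobenius in the following concrete form. The subfamily $\{X_i : i\neq j\}$ spans an involutive distribution $\mathcal{D}_j$ of rank $r-1$, since $[X_i,X_k] \in \langle X_i, X_k\rangle \subset \mathcal{D}_j$ for $i,k\neq j$. On each $(r-1)$-dimensional leaf $\mathcal{L}$ of $\mathcal{D}_j$, the dual coframe $\{\theta^i\}_{i\neq j}$ to $\{X_i|_{\mathcal{L}}\}_{i\neq j}$ gives a well-defined one-form $\alpha_j := -\sum_{i\neq j} h_{i,j}^j\,\theta^i$; the compatibility computed above is exactly $d\alpha_j = 0$ on $\mathcal{L}$, so the Poincar\'e lemma produces a local primitive $g_j|_{\mathcal{L}}$, unique up to an additive constant. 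Prescribing this constant smoothly on a transversal slice to the foliation extends $g_j$ to a smooth function on a neighborhood in $N$ satisfying the full PDE system, and $f_j := \exp(g_j)$ supplies the required rescaling.
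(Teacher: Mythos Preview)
Your proof is correct and follows a genuinely different route from the paper's. The paper proceeds by induction on $r$: assuming the first $r$ vector fields have already been rescaled to commute, it passes to coordinates $\chi_i$ with $X_i=\partial/\partial\chi_i$, solves the system $X_i\ln|f_{r+1}|=-h_{i,r+1}^{r+1}$ for a single new function $f_{r+1}$ (using the Jacobi identity for compatibility, exactly as you do), and then must go back and re-rescale $X_1,\ldots,X_r$ by functions $f_i(\chi_i,\eta)$ so that they still commute among themselves \emph{and} now commute with $\widetilde X_{r+1}$; this last step is handled by a separate characteristics argument. Your approach is more symmetric and avoids both the induction and the two-stage rescaling: you observe immediately that the full commutation is equivalent to the decoupled family of systems $X_ig_j=-h_{i,j}^j$ (one for each $j$), verify via the $X_j$-component of the Jacobi identity that each system is Frobenius-integrable on the leaves of the rank-$(r-1)$ involutive distribution $\mathcal{D}_j=\langle X_i:i\neq j\rangle$, and solve all $g_j$ in one pass. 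The paper's coordinate approach makes the integrability check slightly more concrete (it reduces to $\partial h_{i,r+1}^{r+1}/\partial\chi_j=\partial h_{j,r+1}^{r+1}/\partial\chi_i$), while your leafwise closed-form argument $d\alpha_j=0$ is cleaner and coordinate-free; both rest on the same Jacobi computation.
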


\begin{proof}
	We proceed by induction. The result is immediate for $r=2$. Assume that our theorem is true for $r<q$ vector fields on $\mathbb{R}^q$ and let us prove that our theorem remains true for $r+1$ vector fields. Let   $X_1,\ldots,X_r,X_{r+1}$ be vector fields on $\mathbb{R}^{q}$ satisfying the conditions 1 and 2. The first $r$ vector fields can be rescaled, by the induction hypothesis, in order to obtain $Y_1,\ldots,Y_r$ so  that $[Y_i,Y_j]=0$ with $i,j=1,\ldots,r$. Then, there exist coordinates $\chi_1,\ldots,\chi_r,\eta_1,\ldots,\eta_s$ on $\mathbb{R}^q$ such that
	$$
	X_i=\frac{\partial}{\partial \chi_i},\qquad i=1,\ldots,r.
	$$
	Let us determine whether  there exists a function $f_{r+1}\in C^\infty(N)$ such that 
	\begin{equation}\label{Eq:FC}
		[X_i,f_{r+1}X_{r+1}]\wedge X_{i}=0,\qquad i=1,\ldots,r.
	\end{equation}
	Condition 2 and (\ref{Eq:FC}) imply that
	$$
	[X_i,f_{r+1}X_{r+1}]=(X_if_{r+1})X_{r+1}+f_{r+1}(h_{i,r+1}^{r+1}X_{r+1}+h_{i,r+1}^iX_i),\,\, i=1,\ldots,r,
	$$
	for certain functions $h_{i,r+1}^{r+1},h_{i,r+1}^i$ on $N$ with $i=1,\ldots,r$. Hence, to satisfy the conditions (\ref{Eq:FC}), the function $f_{r+1}$ must be such that 
	\begin{equation}\label{Eq:SC}
		X_i\ln|f_{r+1}|=-h_{i,r+1}^{r+1},\qquad i=1,\ldots,r.
	\end{equation}
	Let us prove that the above system of partial differential equations admits a solution. Its compatibility condition reads
	$$
	\frac{\partial h_{i,r+1}^{r+1}}{\partial \chi_j}=\frac{\partial h_{j,r+1}^{r+1}}{\partial \chi_i},\qquad i,j=1,\ldots,r.
	$$
	To show that the above compatibility condition holds, let us consider the Jacobi identities
	$$
	[X_i,[X_j,X_{r+1}]]+[X_j,[X_{r+1},X_{i}]]+[X_{r+1},[X_i,X_j]]=0,\qquad i,j=1,\ldots,r.
	$$
	Since $[X_i,X_j]=0$ by the induction hypothesis, the Jacobi identities reduces to
	$$
	[X_i,[X_j,X_{r+1}]]-[X_j,[X_{i},X_{r+1}]]=0,\qquad i,j=1,\ldots,r.
	$$
	Using assumption 2, we obtain that 
	$$
	[X_i,[X_j,X_{r+1}]]=\frac{\partial h_{j,r+1}^{r+1}}{\partial \chi_i}X_{r+1}+h_{j,r+1}^{r+1}[X_i,X_{r+1}]+\frac{\partial h^{j}_{j,r+1}}{\partial \chi_i}X_j
	$$
	for any $i,j=1,\ldots,r$, and, moreover,
	\begin{multline*}
		0=[X_j,[X_i,X_{r+1}]]-[X_i,[X_j,X_{r+1}]]=\left(\frac{\partial h_{i,r+1}^{r+1}}{\partial \chi_j}-\frac{\partial h_{j,r+1}^{r+1}}{\partial \chi_i}\right)X_{r+1}\\+h_{i,r+1}^{r+1}[X_j,X_{r+1}]+\frac{\partial h^i_{i,r+1}}{\partial \chi_j}X_i-h_{j,r+1}^{r+1}[X_i,X_{r+1}]-\frac{\partial h^j_{j,r+1}}{\partial \chi_i}X_j\\=
		\left(\frac{\partial h_{i,r+1}^{r+1}}{\partial \chi_j}-\frac{\partial h_{j,r+1}^{r+1}}{\partial \chi_i}\right)X_{r+1}-\left(h_{j,r+1}^{r+1}h_{i,r+1}^{i}-\frac{\partial h^i_{i,r+1}}{\partial \chi_j}\right)X_i\\+\left(h^{r+1}_{i,r+1}h_{j,r+1}^j-\frac{\partial h^j_{j,r+1}}{\partial \chi_i}\right)X_j.
	\end{multline*}
	Since $X_1\wedge\ldots\wedge X_{r+1}\neq 0$, it follows that
	\begin{equation}\label{eq:ConS}
		\frac{\partial h_{j,r+1}^{r+1}}{\partial \chi_i}=\frac{\partial h_{i,r+1}^{r+1}}{\partial \chi_j},\qquad i,j=1,\ldots,r,\qquad i\neq j.
	\end{equation}
	These equalities imply that the system (\ref{Eq:SC}) has a solution  $f_{r+1}$ and therefore, $X_{r+1}$ can be rescaled so that $\widetilde{X}_{r+1}=f_{r+1}X_{r+1}$ satisfies
	\begin{equation}\label{Eq:TC}
		[X_i,\widetilde{X}_{r+1}]\wedge X_{i}=0,\qquad i=1,\ldots,r.
	\end{equation}
	Let us show that it is possible to rescale $X_1,\ldots, X_r$ so that they still commute among themselves and, in addition,  commute with $X_{r+1}$. In fact,  $[f_iX_i,f_jX_j]=0$ for every $i,j=1,\ldots,r$ if and only if 
	$$
	f_i=f_i(\chi_i,\eta),\qquad i=1,\ldots,r.
	$$
	The conditions (\ref{Eq:TC}) for $\widetilde{X}_{r+1}$ imply that the latter takes the form
	$$
	\widetilde{X}_{r+1}=\sum_{i=1}^rf^i_{r+1}(\chi_i,\eta_1,\ldots,\eta_s)\frac{\partial}{\partial \chi_i}+\sum_{\mu=1}^sf^\mu_{r+1}(\eta_1,\ldots,\eta_s)\frac{\partial}{\partial \eta_\mu}
	$$
	for some functions of the form $f_{r+1}^i(\chi_i,\eta_1,\ldots,\eta_s)$ and $f^\mu_{r+1}(\eta_1,\ldots,\eta_s)$ for $i=1,\ldots,r$ and   $\mu=1,\ldots,s$. 
	If $[f_iX_i,\widetilde{X}_{r+1}]=0$, the function $f_i$ must satisfy the partial differential equation
	\begin{equation}\label{Eq:PDEFro}
		0=\widetilde {X}_{r+1}f_i-f_i\frac{\partial f^i_{r+1}}{\partial \chi_i}=f^i_{r+1}(\chi_i,\eta)\frac{\partial f_i}{\partial \chi_i}+\sum_{\mu=1}^sf^\mu_{r+1}(\eta)\frac{\partial f_i}{\partial \eta_\mu}-\frac{\partial f^i_{r+1}}{\partial \chi_i}f_i=0.
	\end{equation}
	For each $i=1,\ldots,r$, the equation (\ref{Eq:PDEFro}) is a linear PDE. Due to the dependence of its coefficients, it can be considered to be defined on $\mathbb{R}^{s+1}$ and it always admits  a solution by the method of characteristics. Hence, we obtain that
	$$
	[f_iX_i,f_jX_j]=0,\qquad i,j=1,\ldots,r+1.
	$$
	By the induction hypothesis, our theorem follows.
\end{proof}
\section{Acknowledgements}
A.M. Grundland was partially supported by an Operating Grant from NSERC (Canada) and the research grant ANR-11LABX-0056-LMHLabEX LMH (Fondation Math\'ematique Jacques Hadamard, France). J. de Lucas and A.M. Grundland acknowledge partial support from a HARMONIA  project (numer 2016/22/M/ST1/00542) funded by the National Science Centre (Poland). J. de Lucas would like very much to thank the staff of the Centre de Recherches Math\'ematiques (CRM) of the University of Montreal for their hospitality and assistance  during his stay at the CRM during the Covid pandemic. 

We would like to dedicate this paper to the memory of our friend Pavel Winternitz (Universit\'e de Montr\'eal), who passed away in February 2021. He  was always a source of inspiration and encouragement.


\end{document}